\documentclass[a4paper,english,reqno]{amsart}

\usepackage{amssymb,amsthm, amsmath,amsfonts}
\usepackage{bbm}
\usepackage{graphicx}
\graphicspath{{figs/}}
\usepackage{subfigure}
\usepackage{microtype}
\usepackage{booktabs}
\usepackage{enumerate}
\usepackage{pgf}
\usepackage[latin1]{inputenc}
\usepackage{verbatim}
\usepackage{cite}
\usepackage{url}
\usepackage{xcolor}

\usepackage[cmtip,all]{xy}
\xyoption{arc}

\usepackage{pgf,tikz}
\usetikzlibrary{arrows,arrows.meta,shapes,calc,positioning,decorations.markings}
\tikzset{midarrow/.style={postaction={decorate},
    decoration={markings,mark=at position #1 with {\arrow{Stealth[length=2.6mm]}}}}}
\usepackage{manfnt}
\usepackage{tikz-cd}
\usepackage{url}
\usepackage{microtype}

\usepackage[font=footnotesize]{caption}

\usepackage{xcolor}
\usepackage{hyperref}

\DeclareMathOperator{\reals}{\mathbb{R}}

\newenvironment{myproof}[2] {\emph{Proof of {#1} {#2}.}}{\hfill$\square$}

\newcommand{\R}{\mathbb{R}}

\renewcommand{\H}{\mathbb{H}}

\newtheorem{theorem}{Theorem}
\newtheorem{lemma}{Lemma}
\newtheorem{proposition}{Proposition}
\newtheorem{corollary}{Corollary}

\theoremstyle{definition}
\newtheorem{definition}[theorem]{Definition}
\newtheorem{example}[theorem]{Example}

\newtheorem*{theoremA*}{Main Theorem}
\newtheorem*{theoremB*}{Main Theorem for a Flute}

\numberwithin{equation}{section}

\begin{document}
	
	\title{Strong stable sets and asymptotic fineness}
	
	\author[ S. Burniol]{ Sergi Burniol Clotet}	
	\address{IMERL, CMAT, Universidad de la Rep\'ublica, IFUMI, IRL CNRS 2030, Uruguay}
	
	\email{sergi.burniol@gmail.com}
	
		\author[F. Dal'Bo]{Fran\c{c}oise Dal'Bo}	
	\address{IRMAR, Universit\'e de Rennes, France and IFUMI, IRL CNRS 2030
		}
	\email{francoise.dalbo@univ-rennes.fr}
	
	\subjclass[2020]{Primary 37D40; Secondary 37D20.}
	
	\date{27 October 2026}

	\dedicatory{}

	\begin{abstract}
		We investigate the strong stable sets of the geodesic flow on hyperbolic surfaces and their relation to the injectivity radius. We prove that the strong stable set of a vector does not coincide with its stable horocycle if and only if the injectivity radius along the associated geodesic ray is not bounded away from zero and the ray does not diverge through a cusp. In that case, the strong stable set is an uncountable union of stable horocycles. In previous works, we considered the particular cases where the ray meets arbitrarily short closed geodesics or closed horocycles. Here we complete the study by treating the third possible case, where the ray meets closed hypercycles of arbitrarily small length.
	\end{abstract}
	
	\maketitle

	\section{Introduction}

	We consider the geodesic flow $g_t$ on the unit tangent bundle $T^1S$ of a hyperbolic surface $S$. We endow $T^1S$ with the Sasaki distance $d_{Sa}$ induced by the hyperbolic metric of $S$. For $u\in T^1 S$, the stable horocycle $H^{ss}u$ is defined as the projection of the stable horocycle of any lift of $u$ to $T^1 \H$, whereas the strong stable set $W^{ss} u$ is the set
	$$
	W^{ss}u=\{ v\in T^1S:~ \lim_{t\to +\infty} d_{Sa}(g_tu,g_tv)=0 \}.
	$$

	Following Bellis \cite{Bellis2018}, we define the \emph{asymptotic fineness} of the geodesic ray $u(\R^+)$ as
	\[
	\liminf_{t\ge 0}\mathrm{Inj}\bigl(u(t)\bigr),
	\]
	where $\mathrm{Inj}(x)$ denotes the injectivity radius of $S$ at $x\in S$. 
	Our main result establishes a link between the asymptotic fineness and strong stable sets.

	\begin{theorem}\label{thm1}
		Let $S$ be a hyperbolic surface and $u\in T^1S$. Then $H^{ss}u\subsetneq W^{ss}u$ if and only if
		\[
		\liminf_{t\ge 0}\mathrm{Inj}\bigl(u(t)\bigr)=0
		\]
		and the geodesic ray $u(\R^+)$ does not diverge through a cusp. In that case, the set $W^{ss}u$ is an uncountable union of stable horocycles $H^{ss} v_i$.
	\end{theorem}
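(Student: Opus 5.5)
Our plan is to work in the upper half-plane with a lift $\tilde u$ of $u$ whose geodesic ray ends at $\infty$, so that the stable horocycle of $\tilde u$ is the set of upward vectors based on a horizontal line $\{\mathrm{Im}\, z = y_0\}$. Write $S=\Gamma\backslash\H$. A vector $v\in T^1S$ lies in $W^{ss}u$ exactly when it has a lift $\tilde v$ and there are elements $\gamma_t\in\Gamma$ with $d_{Sa}(g_t\tilde u,\gamma_t g_t\tilde v)\to 0$. If $\tilde v$ lies on the stable horocycle of $\tilde u$ we may take $\gamma_t=\mathrm{id}$. Otherwise the $\gamma_t$ must change infinitely often. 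When $\gamma_t$ changes to $\gamma_{t'}$ at nearby times, the product $\gamma_{t'}^{-1}\gamma_t$ moves the point $u(t)$ by a small amount, so it witnesses a small injectivity radius there.

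For the direction ``$\Rightarrow$'' we argue by contraposition. First suppose $\liminf \mathrm{Inj}(u(t))>\epsilon>0$. Then, once $d_{Sa}(g_tu,g_tv)<\epsilon/2$, the lift matching $g_tv$ with $g_t\tilde u$ is unique and varies continuously in $t$. So $\gamma_t$ is eventually constant, and $\tilde v$ is asymptotic to $\tilde u$ in $T^1\H$. Two vectors that are strongly asymptotic in $T^1\H$ lie on the same stable horocycle, hence $v\in H^{ss}u$. Next suppose the ray diverges through a cusp. Then eventually $u(t)$ lies in a standard cusp neighbourhood, and we can arrange the lift so that this cusp is $\infty$ with stabiliser generated by a parabolic $z\mapsto z+1$. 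If $v$ is strongly asymptotic to $u$, the ray of $v$ also enters the cusp. Its lift must then end at $\infty$ up to the parabolic stabiliser, so $\tilde v$ lies on a horocycle centred at $\infty$. The distance along the geodesics then forces the height to be $y_0$, and we get $v\in H^{ss}u$.

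For the direction ``$\Leftarrow$'' we use the trichotomy of thin parts. Since Inj is small along a sequence of times $t_n$ but the ray does not diverge in a cusp, at least one of the following holds along a subsequence:
\begin{enumerate}[(i)]
\item the ray crosses collars of closed geodesics whose lengths tend to $0$;
\item the ray enters and exits cusp neighbourhoods deeper and deeper, winding around closed horocycles of length tending to $0$;
\item the ray crosses thin regions of funnels or half-collars around closed hypercycles of small length.
\end{enumerate}
Cases (i) and (ii) are handled by the authors' previous works, which we take as given. For case (iii) the construction mimics them. At each thin passage $n$ we have an element $h_n\in\Gamma$ (hyperbolic, with axis near $u(t_n)$) of small displacement $\ell_n$ at $u(t_n)$. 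We then build $\tilde v$ as a limit of compositions: we shift along the horocycle by an amount that, at time $t_n$, equals the displacement of some power $h_n^{k_n}$, with $k_n\in\{0,1\}$ chosen freely. Independent binary choices should give an uncountable family of distinct stable horocycles $H^{ss}v_i$, and this family would cover $W^{ss}u$.

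The main obstacle is the quantitative estimate. We need $\sum_n$ of the errors to be summable, so that the composed corrections converge in $T^1\H$, while the Sasaki distance at intermediate times between the $t_n$ stays small. For this we would pass to a subsequence with $\ell_n$ decaying fast, say $\ell_n<2^{-n}$. We would also use the explicit geometry of a hypercycle neighbourhood, where the displacement of $h_n$ grows like $\ell_n\cosh(\text{distance to axis})$. This controls how long the ray stays where the correction is small. Finally, we must show that distinct choice sequences give points on genuinely distinct horocycles modulo $\Gamma$, which we expect to follow from discreteness and the divergence of the shifts.
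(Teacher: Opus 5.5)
Your direction $(\Rightarrow)$ is correct. In the bounded-injectivity case it is more self-contained than the paper, which only cites Bellis's Th\'eor\`eme D. If $\mathrm{Inj}(u(t))>\epsilon$ and $d_{Sa}(g_tu,g_tv)<\epsilon/2$, then two deck transformations bringing a lift of $g_tv$ within $\epsilon/2$ of $g_t\tilde u$ differ by an element moving $\tilde u(t)$ by less than $2\epsilon$. So the matching element is unique and locally constant, hence eventually constant. Your cusp case and your citations for cases (i) and (ii) of $(\Leftarrow)$ agree with the paper. The trichotomy should be proved rather than invoked, but that is routine (Proposition \ref{trichotomy}, via \eqref{eq:displacement}). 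Note also that in case (iii) the axes need not be near the ray (Example \ref{example_case3}). Only the displacement at $\tilde u(t_n)$ is small, or comparably the hypercycle length $\ell(\gamma_n)\cosh d(\tilde u(t_n),c_n)$.

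The genuine gap is case (iii), which is the entire new content of the theorem (Theorem \ref{thm2}). There you give a heuristic and leave the decisive estimate open, and the mechanism you suggest for it is not the one that works.
\begin{itemize}
\item \emph{The construction.} As literally described, shifting $\tilde u$ along its own stable horocycle in $T^1\H$ can only produce limits in the closed set $H^{ss}\tilde u$. The paper instead keeps the base point and re-aims the endpoint: the winding $\mathrm{Wind}_\gamma(\tilde u)$ points to $\gamma\tilde u(+\infty)$. Iterating gives vectors $\tilde v_n$ at $i$ pointing to $\beta_n(\infty)=\alpha_0\cdots\alpha_n(\infty)$, which decrease to a limit (Lemma \ref{lemma_vectors}). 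The vectors $w_n=g_{r_n}v_n$ lie in $H^{ss}u$ only in the quotient.
\item \emph{Uniform control in time.} This does not come from how long the ray lingers where $\ell_n\cosh(\cdot)$ is small. It is the Key Proposition \ref{key_prop}. Suppose $\tilde u$ is in good position with respect to $\gamma$: it is tangent at $q=\tilde u(t_1)$, $t_1>0$, to a hypercycle of $\gamma$, with the orientation induced by $\gamma$. Suppose also $d(q,\gamma q)\le 1$. Then for \emph{every} $t\ge0$, the winding shifted in time by $\tau$, where $|\tau|\le d(q,\gamma q)$ (Proposition \ref{bound_wind}), is within $17\,d(q,\gamma q)$ of $g_t\tilde u$ or of $\gamma g_t\tilde u$. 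The proof rests on the band-model bound $d(q,q')\le 2d(q,\gamma q)$ for the points closest to the axis. It also uses monotonicity of distances before $t_1$ (common base point) and after $t_1$ (common endpoint, Lemma \ref{decreasing}).
\item \emph{Good position must survive the iteration.} The vector $\beta_n^{-1}\tilde v_n$ must be in good position for $\alpha_{n+1}$, with displacement at most $\delta_{n+1}^\alpha$ at its closest point (Lemma \ref{good_position_vn}). This uses the normalization of Proposition \ref{normalization}: all axes on one side of $\tilde u(\R)$, $0<\gamma_n^-<\gamma_n^+<\gamma_{n+1}^-$, disjoint hypercycles, and $\gamma_n$ replaced by $\gamma_n^{-1}$ when needed. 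It also uses the nested regions $R_n$ with $\alpha_n^{-1}(R_n)\subset R_n$. Only then does telescoping (Proposition \ref{uniform_tube}) give $w_\alpha\in W^{ss}u$.
\end{itemize}

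Finally, independent binary choices need not give distinct horocycles. Different choice sequences could a priori give the same endpoint, or endpoints in one $\Gamma$-orbit, and ``discreteness and divergence of the shifts'' does not rule this out. The paper proves that the endpoint set $\xi(\Sigma)$ is uncountable by a diagonal argument based on $\beta_{n-1}^{-1}\xi(\alpha)\ge\alpha_n^+$ and $\gamma_k^+\to+\infty$. Countability of $\Gamma$ then yields uncountably many $\Gamma$-orbits of endpoints. Hence some $w_\alpha\notin H^{ws}u$, and there are uncountably many distinct $H^{ss}w_\alpha\subset W^{ss}u$. Your claim that the family covers $W^{ss}u$ is unjustified and also unnecessary, since $W^{ss}u$ is automatically a union of stable horocycles.
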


	There are several ways for the asymptotic fineness of $u(\R^+)$ to vanish. Bellis \cite{BellisThesis2018} and the authors together with S. Herrero \cite{BurniolDalboHerrero2026} study the case where $u(\R^+)$ meets a sequence of arbitrarily short closed geodesics. In \cite{BurniolDalbo2026}, the authors study the case where $u(\R^+)$ meets a sequence of arbitrarily short closed horocycles. The contribution of this article is to study a third possible case, which is not always covered by the previous works: the ray $u(\R^+)$ meets closed hypercycles of arbitrarily small length.

	\begin{theorem}\label{thm2}
		Let $S$ be a hyperbolic surface and $u\in T^1S$. If the geodesic ray $u(\R^+)$ meets an infinite sequence of closed hypercycles whose lengths tend to zero, then
		\begin{enumerate}
			\item $H^{ss} u\subsetneq W^{ss}u$,
			\item the set $W^{ss}u$ is an uncountable union of stable horocycles $H^{ss} v_i$.
		\end{enumerate}
	\end{theorem}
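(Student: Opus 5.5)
The plan is to build explicit vectors $v\in W^{ss}u\setminus H^{ss}u$ by pushing $u$ sideways, at each short hypercycle, along the direction of the closed geodesic at the core of that funnel. We then check that the accumulated displacement stays summable and that different sequences of choices give different horocycles.

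\emph{Setup.} Lift to $T^1\H$ with $S=\Gamma\backslash\H$. A closed hypercycle $h_n$ of small length $\ell_n$ is an equidistant curve at distance $r_n$ from a boundary geodesic $c_n$ of a funnel. This geodesic is the axis of a hyperbolic element $\gamma_n\in\Gamma$ of translation length $L_n$, with $\ell_n=L_n\cosh r_n$. The hypothesis $\ell_n\to0$ forces $L_n\to0$ and keeps $r_n$ bounded in terms of $\ell_n$. Let $t_n\to\infty$ be the times where $u(t_n)$ meets $h_n$. Near $u(t_n)$ the injectivity radius is about $\ell_n/2$, so $\gamma_n$ moves the lifted point $\tilde u(t_n)$ by roughly $\ell_n$.

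\emph{Construction.} Pass to a subsequence with $\sum_n \ell_n<\infty$ and $t_{n+1}-t_n$ growing fast. Fix a sequence $\varepsilon=(\varepsilon_n)\in\{0,1\}^{\mathbb N}$. The aim is to define $\tilde v_\varepsilon\in T^1\H$ as a limit of vectors $\tilde w_N$, where $\tilde w_N$ is obtained from $\tilde u$ by applying the isometries $\gamma_n^{\varepsilon_n}$ conjugated to act near $\tilde u(t_n)$ for $n\le N$. Concretely:
\begin{enumerate}
\item The endpoint $\xi_N=\tilde w_N(+\infty)$ will be defined by replacing, after time $t_n$, the ray $\tilde u$ by its image under $\gamma_n$, which leaves the Sasaki distance at time $t_n$ of order $\ell_n$.
\item Then $\tilde w_N$ is chosen on the stable horocycle based at $\xi_N$ so that $g_t\tilde w_N$ stays $O(\sum_{n\le N}\ell_n)$-close to $g_t\tilde u$ modulo $\Gamma$ for $t\ge 0$.
\end{enumerate}
This is the ``broken horocycle'' approach of Bellis and of \cite{BurniolDalboHerrero2026}, with the cusp parabolic replaced by $\gamma_n$. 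The key estimate is a hyperbolic-geometry lemma. For a vector at distance $r$ from the axis of $\gamma$, pointing along a direction that crosses the hypercycle, the shift by $\gamma$ yields a vector whose projection to $S$ coincides with the original one. The associated geodesic in $\H$, however, has its endpoint on the other side, at a distance of order $\ell$ along the horocycle at time $t_n$. This discrepancy decays like $e^{-(t-t_n)}$ after $t_n$ as long as both rays stay in the thick part, and it is absorbed by the next step before $t_{n+1}$.

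\emph{Verification.} We first check that $d_{Sa}(g_tu,g_tv_\varepsilon)\to0$. On $[t_n,t_{n+1}]$ the distance is bounded by $C\ell_n+C\sum_{m>n}\ell_m$, which tends to $0$ by summability. Next, when $\varepsilon\ne0$ we have $v_\varepsilon\notin H^{ss}u$: the lifts of $v_\varepsilon$ have endpoints different from every $\Gamma$-translate of $\tilde u(+\infty)$. Indeed, the element of $\Gamma$ carrying $\tilde v_\varepsilon(+\infty)$ back would have to be the infinite product of the $\gamma_n^{\varepsilon_n}$, which does not stabilize in $\Gamma$ by discreteness. Finally, distinct $\varepsilon$ (modulo finite changes, which already yields uncountably many classes) give distinct horocycles. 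Since $W^{ss}u$ is a union of stable horocycles ($W^{ss}$ is invariant under $H^{ss}$), this gives both (1) and (2).

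\emph{Main obstacle.} The hardest part is the hypercycle geometry lemma. For horocycles the displacement of a parabolic is uniform along the horocycle. For hypercycles, $\gamma_n$ also translates along the axis, and the ray may enter the funnel side, where distances grow. The first thing to try is to ensure the ray exits the region between $h_n$ and the thick part quickly; the bound on $r_n$ coming from $\ell_n\to0$ is what should control the time the ray spends near $h_n$, and hence the error $e^{O(r_n)}\ell_n$. The second obstacle is the non-stabilization argument for the endpoints, which should follow from the $t_n$ being spread out.
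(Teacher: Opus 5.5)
Your overall architecture matches the paper's: products of the short hyperbolic elements, a limiting endpoint, summable errors, and a horocyclic normalization. However, two steps have genuine gaps.

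\textbf{The key geometric estimate is missing, and your route to it rests on a false claim.} From $\ell_n=L_n\cosh r_n$ and $\ell_n\to0$ you only get $L_n\to0$; nothing bounds $r_n$. In the collar of a geodesic of length $L_n$, hypercycles at distance up to about $\log(1/L_n)$ are still short. Example~\ref{example_case3} is precisely a ray whose distance to the short geodesics tends to infinity. So an error of size $e^{O(r_n)}\ell_n$ is not controlled by $\ell_n$ and need not even tend to zero. You also cannot bound the time the ray spends near $h_n$.

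What you need is an error linear in the displacement, uniform in $t\ge0$, and independent of the distance to the axis. The paper obtains it as follows.
\begin{enumerate}
\item It discards the case where the ray meets the core geodesics infinitely often; that is the crossing case of \cite{BurniolDalboHerrero2026}.
\item It works not at the time the ray meets $h_n$, but at the point $q_n$ of $\tilde u(\R)$ closest to the axis. There the ray is tangent to a hypercycle and $d(q_n,\gamma_nq_n)$ is even smaller. $\gamma_n$ is oriented so that $\tilde u$ is in good position.
\item It shows in the band model that the closest point $q'$ of the geodesic from $\tilde u(0)$ to $\gamma_n\tilde u(+\infty)$ satisfies $d(q_n,q')\le 2d(q_n,\gamma_nq_n)$.
\item Monotonicity then gives the bound $17\,d(q_n,\gamma_nq_n)$ of Proposition~\ref{key_prop}: rays from a common point diverge up to $q_n$, and asymptotic rays converge afterwards.
\end{enumerate}
Two further ingredients are needed. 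The winding-time bound of Proposition~\ref{bound_wind} makes the horocyclic normalizations converge, so your $\tilde w_N$ actually has a limit. The nesting $0<\gamma_n^-<\gamma_n^+<\gamma_{n+1}^-$, with pairwise disjoint hypercycles, ensures that after applying $(\alpha_0\cdots\alpha_n)^{-1}$ the ray is again in good position for the next element, with displacement at most $\delta^\alpha_{n+1}$ (Lemma~\ref{good_position_vn}). The phrase ``conjugated to act near $\tilde u(t_n)$'' does not provide this.

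\textbf{The argument that $v_\varepsilon\notin H^{ss}u$ is wrong.} If $\varepsilon\neq0$ has finitely many nonzero terms, the endpoint is $\gamma_{n_1}\cdots\gamma_{n_k}\tilde u(+\infty)\in\Gamma\tilde u(+\infty)$, contradicting your claim. If it has infinitely many, ``the element carrying $\tilde v_\varepsilon(+\infty)$ back would have to be the infinite product'' is not an argument. Any $g\in\Gamma$ with $g\tilde u(+\infty)=\tilde v_\varepsilon(+\infty)$ would do, and nothing ties $g$ to the $\gamma_n$. Likewise, you have not shown that $\varepsilon\mapsto\tilde v_\varepsilon(+\infty)$ is injective. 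Still less have you shown that inequivalent sequences give endpoints in distinct $\Gamma$-orbits, which is what you would need for distinct horocycles.

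The paper never decides this for an individual sequence. It shows by a diagonal argument that the set $\xi(\Sigma)$ of limiting endpoints is uncountable: pick $\alpha_{n+1}'$ with $\alpha_{n+1}'^+>\alpha_n'^{-1}\cdots\alpha_0'^{-1}(x_{n+1})$, and use $\alpha_{n-1}^{-1}\cdots\alpha_0^{-1}(\xi(\alpha))\ge\alpha_n^+$. Since each $\Gamma$-orbit is countable, $\xi(\Sigma)$ meets uncountably many orbits. This gives (1) and (2) at once.
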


	These three cases are exhaustive: if the asymptotic fineness of $u(\R^+)$ vanishes and $u(\R^+)$ does not diverge through a cusp, then at least one of them occurs (Proposition \ref{trichotomy}). However, they are not mutually exclusive.



	The proof of Theorem \ref{thm2} follows the strategy of \cite{BurniolDalboHerrero2026} and \cite{BurniolDalbo2026}. We adapt to closed hypercycles the notion of \emph{winding} used in the previous works. One novelty is the use of the hyperbolic band model, in which the hypercycles around a given geodesic become horizontal lines, to obtain the necessary estimates.

	The paper is organized as follows. In Section \ref{sec:prelim} we recall the facts of hyperbolic geometry that will be used. In Section \ref{sec:fineness} we prove the trichotomy (Proposition \ref{trichotomy}), we deduce Theorem \ref{thm1} from Theorem \ref{thm2} and the previous works, and we give an example of the third case. In Section \ref{sec:winding} we introduce the winding around a closed hypercycle and we prove the Key Proposition (Proposition \ref{key_prop}), which controls the distance between a geodesic ray and its winding. Section \ref{sec:proof} is devoted to the proof of Theorem \ref{thm2}, where we construct uncountably many vectors in $W^{ss}u\setminus H^{ss}u$.

	\section{Preliminaries}\label{sec:prelim}
	 
	Throughout the paper we use the upper half-plane model $\mathbb H=\{z=x+iy\in\mathbb C : y>0\} $
	equipped with the metric
	\[
	ds^2=\frac{dx^2+dy^2}{y^2}.
	\]
	The Riemannian distance induced by this metric is denoted by $d$. We denote the unit tangent bundle of $\H$ by $T^1\mathbb H$, and the basepoint projection by $\pi:T^1\mathbb H\to\mathbb H$.
	
	\medskip
	
	\subsection{Horocycles and boundary at infinity}
	
	The boundary at infinity of $\mathbb H$ is $\partial\mathbb H=\mathbb R\cup\{\infty\}$.
	Every geodesic ray in $\mathbb H$ converges to a unique point of $\partial\mathbb H$, and two rays define the same point at infinity if and only if they remain at bounded distance from each other.
	
	\medskip
	
	Let $\xi\in\partial\mathbb H$, and let $(c(t))_{t\ge0}$ be a geodesic ray converging to $\xi$. 
	The \emph{Busemann function} based at $\xi$ is defined by
	\[
	B_\xi(x,y)
	=
	\lim_{t\to+\infty}
	\big(
	d(x,c(t)) - d(y,c(t))
	\big),
	\qquad x,y\in\mathbb H.
	\]
	This limit exists and does not depend on the choice of the geodesic ray $c$ asymptotic to $\xi$. 
	For fixed $x\in\mathbb H$, the level sets of the map $y\mapsto B_\xi(y,x)$ are called \emph{horocycles} centered at $\xi$. Geometrically, horocycles are circles tangent to the real line or horizontal lines.
	
	\medskip
	
	For $u\in T^1\mathbb H$, we denote by $u(+\infty)\in\partial\mathbb H$ its forward endpoint. 
	The \emph{stable horocycle} through $u$ is the subset
	\[
	H^{ss} u
	=
	\{ v\in T^1\mathbb H : v(+\infty)=u(+\infty) 
	\text{ and } 
	B_{u(+\infty)}(\pi(v),\pi(u))=0 \}. 
	\] We will also denote 
	\[
	H^{ws} u
	=
	\{ v\in T^1\mathbb H : v(+\infty)=u(+\infty)  \}.
	\]
	The following lemma can be proved by elementary computations using the formula for the hyperbolic distance \cite[Theorem 7.2.1]{Beardon1983}.
	\begin{lemma}\label{decreasing}
		Let $u,v\in T^1\H$ such that $v\in H^{ws} u$. Then $d(v(t),u(t))$ is non-increasing in $t\in \reals$.

	\end{lemma}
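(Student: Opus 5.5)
We reduce to a normal form using the isometry group and then compute directly. Since $v\in H^{ws}u$, the two vectors share the forward endpoint $\xi=u(+\infty)=v(+\infty)$. Applying an isometry of $\H$, which preserves distances and commutes with the geodesic flow, we may assume $\xi=\infty$. The geodesics of $u$ and $v$ are then vertical lines. Write
\[
u(t)=x_1+i e^{t}a,\qquad v(t)=x_2+ie^{t}b,
\]
with $a,b>0$, where the parametrization is by arc length.

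The formula for the hyperbolic distance \cite[Theorem 7.2.1]{Beardon1983} gives
\[
\cosh d(z,w)=1+\frac{|z-w|^2}{2\,\mathrm{Im}z\,\mathrm{Im}w}.
\]
Applied to our two points, this yields
\[
\cosh d\bigl(u(t),v(t)\bigr)=1+\frac{(x_1-x_2)^2+e^{2t}(a-b)^2}{2e^{2t}ab}
=1+\frac{(x_1-x_2)^2}{2ab}e^{-2t}+\frac{(a-b)^2}{2ab}.
\]
The last expression is a constant plus a non-negative multiple of $e^{-2t}$, so it is non-increasing in $t$. Since $\cosh$ is increasing on $[0,+\infty)$ and distances are non-negative, $d(u(t),v(t))$ is non-increasing as well.

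There is no real obstacle; the only point requiring care is the reduction to $\xi=\infty$ with both geodesics parametrized as above. This is legitimate because isometries act equivariantly on $T^1\H$ and map $H^{ws}u$ to $H^{ws}$ of the image vector.

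Note also that the computation shows strict decrease when $x_1\ne x_2$. Moreover, the limit as $t\to+\infty$ is $\operatorname{arccosh}\bigl(1+(a-b)^2/(2ab)\bigr)=|\log(a/b)|$, which vanishes exactly when $a=b$, i.e. when $B_\infty(\pi(v),\pi(u))=0$. This characterizes $H^{ss}u$ inside $H^{ws}u$ and may be useful later.
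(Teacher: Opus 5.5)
Your proof is correct and takes the same approach as the paper, which only says the lemma follows from elementary computations with the hyperbolic distance formula of Beardon. Your normalization to $\xi=\infty$, followed by the explicit expression $\cosh d(u(t),v(t))=1+\frac{(x_1-x_2)^2}{2ab}e^{-2t}+\frac{(a-b)^2}{2ab}$, is exactly the computation it leaves to the reader.
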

	
	\medskip
	
	\subsection{Distance on the unit tangent bundle}

	
	We consider the distance $d_1$ on the unit tangent bundle of $T^1\H$ defined as follows \cite{ballmannlectures}:
	$$ d_1(\tilde v,\tilde w):=d(\tilde v(0),\tilde w(0))+d(\tilde v(1),\tilde w(1)), \quad \tilde v, \tilde w \in T^1 \H.$$
	
	\begin{proposition}
		The distance $d_1$ is equivalent to the Sasaki distance on the unit tangent bundle $T^1 \H$. 
	\end{proposition}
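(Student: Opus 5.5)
The plan is to compare $d_1$ and the Sasaki distance $d_{Sa}$ in two regimes, small distances and large distances, and to use the homogeneity of $T^1\H$ under $\mathrm{PSL}_2(\R)$ to make every constant uniform. Both distances are invariant under the isometry group: $d_1$ because isometries commute with the geodesic flow and preserve $d$, and $d_{Sa}$ because isometries of $\H$ induce Sasaki isometries of $T^1\H$. Since $\mathrm{PSL}_2(\R)$ acts simply transitively on $T^1\H$, it therefore suffices to compare $d_1(\tilde v,\tilde w)$ and $d_{Sa}(\tilde v,\tilde w)$ with $\tilde v$ a fixed base vector $\tilde v_0$, say the vector at $i$ pointing upward. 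Whatever constants we get at $\tilde v_0$ then hold globally. We aim for constants $C\ge 1$ with
\[ C^{-1} d_{Sa}\le d_1\le C\, d_{Sa}. \]

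\textbf{Step 1: $d_1$ is a distance.} Symmetry and the triangle inequality are inherited from $d$. For separation, suppose $d_1(\tilde v,\tilde w)=0$. Then $\tilde v(0)=\tilde w(0)$ and $\tilde v(1)=\tilde w(1)$. Through two distinct points of $\H$ there is a unique geodesic, so the unit-speed geodesics coincide and $\tilde v=\tilde w$.

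\textbf{Step 2: local comparison, the main point.} Consider the map $\Phi:T^1\H\to\H\times\H$, $\tilde w\mapsto(\tilde w(0),\tilde w(1))$. It is a smooth embedding, since it is injective by Step 1 and its inverse on the image is smooth. Indeed, the image is the closed set $\{(x,y): d(x,y)=1\}$, a smooth $3$-submanifold, and $\Phi$ is an immersion: the differential is injective because Jacobi fields with $J(0)=J(1)=0$ vanish in negative curvature. Near $\tilde v_0$, the pullback by $\Phi$ of the product Riemannian metric and the Sasaki metric are therefore two Riemannian metrics on a compact neighbourhood, hence bi-Lipschitz equivalent there. The distance $d_1$ is, up to a factor at most $2$, the product distance $d\times d$ restricted to the image; it is comparable to the $\ell^2$ product distance. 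On the submanifold $\Phi(T^1\H)$, the intrinsic distance and the ambient product distance are locally comparable, because a submanifold is locally bi-Lipschitz to its chart. Combining these gives $\varepsilon>0$ and $C$ such that the comparison holds whenever $d_{Sa}(\tilde v_0,\tilde w)<\varepsilon$ or $d_1(\tilde v_0,\tilde w)<\varepsilon$. I expect the care here to lie in this equivalence of intrinsic and extrinsic distances, which needs $\Phi$ to be a proper embedding. That is true because the image is closed.

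\textbf{Step 3: large distances.} Both distances are large-scale comparable to the distance between basepoints. For $d_{Sa}$, the projection $\pi$ is $1$-Lipschitz, and the fibres have bounded diameter $\pi$. Hence
\[ d(\tilde v(0),\tilde w(0))\le d_{Sa}(\tilde v,\tilde w)\le d(\tilde v(0),\tilde w(0))+\pi. \]
For $d_1$, the triangle inequality gives
\[ d(\tilde v(0),\tilde w(0))\le d_1(\tilde v,\tilde w)\le 2d(\tilde v(0),\tilde w(0))+2. \]
So when either distance is at least $\varepsilon$, the other is bounded above by an affine function of it. The additive constants are absorbed into the multiplicative constant as follows. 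If $d_{Sa}\ge \varepsilon'$, then $d_1\le 2d_{Sa}+2\le (2+2/\varepsilon')d_{Sa}$. Conversely, if $d_1\ge \varepsilon'$, then $d_{Sa}\le d_1+\pi\le(1+\pi/\varepsilon')d_1$.

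To choose the threshold, note that Step 2 yields that $d_{Sa}<\varepsilon$ implies $d_1<C\varepsilon$. So taking $\varepsilon'$ small covers all remaining cases, and the global constants follow. (Alternatively, if "equivalent" is meant only topologically, Step 2 alone suffices.)
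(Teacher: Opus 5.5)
The paper gives no proof of this proposition. It introduces $d_1$ with a reference to Ballmann's lectures and states the equivalence with the Sasaki distance as a known fact, so there is no argument of its own to compare with. Your proof is correct and self-contained.

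It also proves the strongest natural form of the statement: a global bi-Lipschitz comparison $C^{-1}d_{Sa}\le d_1\le C\,d_{Sa}$ on all of $T^1\mathbb{H}$. You get it by combining three ingredients:
\begin{enumerate}
\item a local comparison at one vector, via the embedding $\tilde w\mapsto(\tilde w(0),\tilde w(1))$ onto $\{d(x,y)=1\}$;
\item $\mathrm{PSL}_2(\mathbb{R})$-homogeneity, which makes the local constants uniform;
\item the coarse bounds $d\le d_{Sa}\le d+\pi$ and $d\le d_1\le 2d+2$ at large scale.
\end{enumerate}

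This uniform version is exactly what the paper tacitly uses afterwards. It is needed to assert that $W^{ss}u$ is the same whether defined with $d_1$ or with $d_{Sa}$, which requires uniform control at small scales. It is also needed to pass to $T^1S$ via $d_1(u,v)=\inf_{\gamma\in\Gamma}d_1(\tilde u,\gamma\tilde v)$. Both distances are $\Gamma$-invariant and the Sasaki distance on $T^1S$ is the quotient one, so the same constants hold on $T^1S$. Your closing parenthetical, that Step 2 alone suffices if "equivalent" means topological, would therefore not be enough for how the proposition is used; keep the global statement.

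Three small presentational points:
\begin{enumerate}
\item The local intrinsic/extrinsic comparison on $\Phi(T^1\mathbb{H})$ only needs $\Phi$ to be an embedding, not a proper one.
\item The bound $d_{Sa}\le d+\pi$ uses that the horizontal lift (parallel transport) of a geodesic segment has Sasaki length equal to its base length. State this explicitly.
\item The threshold bookkeeping is simpler than you make it. Take $\varepsilon'=\varepsilon$ from Step 2, and split each of the two inequalities according to whether the distance on its right-hand side is below or above $\varepsilon$.
\end{enumerate}
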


	\medskip
	
	\subsection{Fuchsian groups}
	Throughout the paper, $\Gamma$ denotes a torsion-free Fuchsian group acting properly discontinuously by isometries on $\mathbb H$. 
	We consider the quotient surface $S=\Gamma\backslash\mathbb H$
	and its unit tangent bundle
	$	T^1S=\Gamma\backslash T^1\mathbb H$.
	We use a tilde to denote lifts to the universal cover: given $u\in T^1S$, we write $\tilde u\in T^1\mathbb H$ for a unit tangent vector projecting onto $u$ under the canonical projection $T^1\mathbb H\to T^1S$.
	
	The sets $H^{ss}u$ and $H^{ws}u$ are $\Gamma$-equivariant and therefore descend to well-defined subsets in $T^1S$, which we still denote by $H^{ss}u$ and $H^{ws}u$.
	
	On the unit tangent bundle $T^1 S$ of a hyperbolic surface $S=\Gamma\backslash\mathbb H $, the distance $d_1$ descends as follows: 
	
	$$d_1(u,v)=\inf_{\gamma\in \Gamma}\{d_1(\tilde u,\gamma\tilde v)\},$$
	where $\tilde u$ and $\tilde v$ are lifts of $u$ and $v$ to $T^1\H$. This distance is again equivalent to the Sasaki metric on $T^1S$.
	
	The strong-stable set of a vector $u\in T^1S$ is defined as
	$$
	W^{ss}u=\{ v\in T^1S:~ \lim_{t\to +\infty} d_1(g_tu,g_tv)=0 \}.
	$$ 
	We remark that, since the distance $d_1$ is equivalent to the Sasaki distance, they give the same notion of strong-stable set.

	\subsection{Hypercycles and hyperbolic isometries}\label{sec:hypercycles}

	Let $\gamma$ be a hyperbolic isometry of $\H$. It fixes exactly two points in $\partial\H$, the repelling fixed point $\gamma^-$ and the attracting fixed point $\gamma^+$. The geodesic $c$ joining $\gamma^-$ to $\gamma^+$ is the \emph{axis} of $\gamma$, and $\gamma$ acts on $c$ as a translation of length
	\[
	\ell(\gamma)=\min_{z\in\H} d(z,\gamma z).
	\]
	The displacement of a point $z\in\H$ by $\gamma$ only depends on its distance to the axis \cite[Theorem 7.35.1]{Beardon1983}:
	\begin{equation}\label{eq:displacement}
	\sinh\left(\frac{d(z,\gamma z)}{2}\right)=\cosh\bigl(d(z,c)\bigr)\,\sinh\left(\frac{\ell(\gamma)}{2}\right).
	\end{equation}
	In particular, the displacement is minimal along the axis and increases with the distance to the axis.

	For $r>0$, the set of points of $\H$ at distance $r$ from $c$ consists of two curves, one on each side of $c$, called \emph{hypercycles} of $\gamma$. Each of them is invariant under $\gamma$. In the upper half-plane model, they are arcs of Euclidean circles, or Euclidean half-lines, joining $\gamma^-$ to $\gamma^+$. Given a point $q\in\H$, we denote by $\mathcal H(\gamma,q)$ the hypercycle of $\gamma$ through $q$.

	\begin{proposition}\label{closest_points}
		Let $c_1$, $c_2$ be two geodesics in $\mathbb{H}$ such that $d(c_1,c_2)>0$. Then there exist unique $p = c_1(t_1)$, $q=c_2(t_2)$ realizing the distance between $c_1$ and $c_2$. Moreover, there is an hypercycle at distance $d(c_1,c_2)$ from $c_1$ that is tangent to $c_2$ at $q$.
	\end{proposition}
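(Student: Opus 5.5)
We prove the statement in two steps: first existence and uniqueness of the closest pair of points, then the tangency assertion. Throughout, set $D=d(c_1,c_2)>0$.

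\emph{Existence.} Since $D>0$, the geodesics $c_1,c_2$ are neither intersecting nor asymptotic, so their four endpoints in $\partial\H$ are pairwise distinct. Hence $c_1$ and $c_2$ cannot come arbitrarily close along a sequence of points escaping to infinity on either of them. Indeed, if $d(c_1(s_n),c_2(s'_n))$ were bounded with $|s_n|\to\infty$, then $c_2(s'_n)$ would converge to the same endpoint of $c_1$ as $c_1(s_n)$, which would force a common endpoint. Consequently the continuous function $(s,s')\mapsto d(c_1(s),c_2(s'))$ is proper on $\R^2$ and attains its infimum $D$ at some pair $(t_1,t_2)$.

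\emph{Uniqueness.} Put $p=c_1(t_1)$ and $q=c_2(t_2)$. By the first variation formula, the segment $[p,q]$ is orthogonal to both $c_1$ and $c_2$. If there were a second minimizing pair $(p',q')\neq(p,q)$, then $p'\neq p$: otherwise $q'\neq q$, and both $q$ and $q'$ would be feet of perpendiculars from $p$ to $c_2$, which is impossible. So the points $p,q,q',p'$ would form a quadrilateral with four right angles. This contradicts Gauss--Bonnet in curvature $-1$, since the angle sum of such a quadrilateral must be less than $2\pi$. The degenerate configurations are excluded by the same count. Alternatively, one can use the strict convexity of the distance function between two disjoint geodesics in $\H$.

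\emph{Tangency.} Let $\mathcal{H}$ be the hypercycle at distance $D$ from $c_1$ on the side containing $c_2$. It passes through $q$, since $d(q,c_1)=d(q,p)=D$. For every point $z$ of $c_2$ we have $d(z,c_1)\ge D$, so $c_2$ lies in the closed region bounded by $\mathcal H$ that does not contain $c_1$, and meets $\mathcal H$ at $q$. Both $c_2$ and $\mathcal{H}$ are smooth curves through $q$, and $c_2$ stays on one side of $\mathcal H$. Therefore they are tangent at $q$. Concretely, the function $s\mapsto d(c_2(s),c_1)$ has a minimum at $t_2$, so its derivative vanishes there. This means $c_2'(t_2)$ is orthogonal to the gradient of $d(\cdot,c_1)$, which is the direction of $[p,q]$, and hence $c_2'(t_2)$ is tangent to the level set $\mathcal{H}$.

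The main obstacle is the properness argument in the existence step, namely ruling out that $D$ is only approached at infinity. To make it rigorous, we normalize $c_1$ to be the imaginary axis. In that model the distance to $c_1$ is an explicit function of the argument, $\cosh d(z,c_1)=1/\sin\arg z$. Since $c_2$ is a Euclidean semicircle with both endpoints away from $0$ and $\infty$, its argument is bounded away from $0$ and $\pi$. Moreover, the function tends to $+\infty$ at both ends of $c_2$ if the endpoints of $c_2$ lie on opposite sides of $0$ … in general, the function $s\mapsto d(c_2(s),c_1)$ is continuous and tends to a limit strictly greater than its infimum at both ends. This yields a minimizer on $c_2$, and then $p$ is taken to be the foot of the perpendicular from $q$ to $c_1$.
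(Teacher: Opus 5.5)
The paper gives no proof of this proposition; it is quoted as a standard fact. The argument implicit in the text is the band-model computation of Section~\ref{sec:band}. Take $c_1=\{y=0\}$; by item 4(a), $c_2$ is $\cosh(x-x_0)=\alpha\sin(y)$ with, say, $\alpha>1$. Then $\sin(y)\ge 1/\alpha$ on $c_2$, with equality only at $x=x_0$. Since $\cosh d(x+iy,c_1)=1/\cos(y)$ increases with $|y|$, this gives in one stroke:
\begin{itemize}
\item the unique closest point $q=x_0+iy_0$, with $\sin y_0=1/\alpha$;
\item its unique foot $p=x_0$ on the vertical common perpendicular;
\item the tangency of $c_2$ with the horizontal hypercycle $\{y=y_0\}$, as recorded in the caption of Figure~\ref{fig:band}.
\end{itemize}
Your route is synthetic instead. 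You use properness of $(s,s')\mapsto d(c_1(s),c_2(s'))$ for existence, first variation plus an angle count on a quadrilateral with four right angles for uniqueness, and the vanishing derivative of $s\mapsto d(c_2(s),c_1)$ at $t_2$ for tangency. These three steps are correct. In the uniqueness step, state explicitly that $q'\neq q$ by the symmetric argument. Also note that $[p,q]$ and $[p',q']$ cannot cross, since that would produce a triangle with two right angles; so the quadrilateral is simple and Gauss--Bonnet applies. Your argument is coordinate-free and uses only negative curvature. The band-model computation is shorter and produces the explicit formulas ($\sin y_0=1/\alpha$, $\cosh d(p,q)=1/\cos y_0$) that the paper uses later in the Key Proposition.

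Your final paragraph should be removed or rewritten. It is not needed: your existence step is already rigorous, because points at bounded distance from $c_1(s_n)\to\xi\in\partial\H$ converge to $\xi$. As written, it is also false. With $c_1$ the imaginary axis, $d(c_1,c_2)>0$ forces both endpoints of $c_2$ to lie on the same side of $0$. If they lay on opposite sides, $c_2$ would cross $c_1$, so your ``opposite sides'' case is vacuous. Consequently $\arg z$ is \emph{not} bounded away from $0$ and $\pi$ on $c_2$: it tends to $0$ (or $\pi$) at both ends. That is precisely why $\cosh d(z,c_1)=1/\sin\arg z\to+\infty$ there. Done correctly, this is the upper half-plane version of the band-model proof. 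The hypercycles of the imaginary axis are the Euclidean rays $\{\arg z=\theta\}$, and $\sin\arg z$ restricted to the semicircle $c_2$ attains its maximum at a unique point, where a ray from $0$ is tangent to $c_2$. This yields existence, uniqueness and tangency at once.
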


	If $\gamma$ belongs to $\Gamma$, the hypercycle $\mathcal H(\gamma,q)$ projects to a closed curve in $S=\Gamma\backslash\H$, which we call a \emph{closed hypercycle}. A direct computation shows that the arc of $\mathcal H(\gamma,q)$ between $q$ and $\gamma q$ has length
	\begin{equation}\label{eq:arc}
	\ell(\gamma)\cosh d(q,c).
	\end{equation}
	If $\gamma$ is primitive in $\Gamma$, this is the length of the closed hypercycle of $\gamma$.

	\subsection{The band model}\label{sec:band}

	For some computations it is convenient to use the band model
	\[
	\mathbb B=\left\{ x+iy\in\mathbb C \ :\ -\frac{\pi}{2}<y<\frac{\pi}{2}\right\}, \qquad ds^2=\frac{dx^2+dy^2}{\cos^2 (y)}.
	\]
	The map $z\mapsto ie^{z}$ is an isometry from $\mathbb B$ onto $\H$, which sends the real line $\{y=0\}$ onto the imaginary axis. The boundary at infinity of $\mathbb B$ consists of the two lines $\{y=\pm\pi/2\}$ together with the two points $\pm\infty$. We will use the following facts.
	\begin{enumerate}
		\item The real line $\{y=0\}$ is a geodesic. For $\lambda>0$, the translation $z\mapsto z+\lambda$ is a hyperbolic isometry with axis $\{y=0\}$ and translation length $\lambda$.
		\item The vertical segments $\{x=k\}$, $k\in\reals$, are geodesics orthogonal to $\{y=0\}$.\label{band:orth}
		\item The horizontal lines $\{y=k\}$, $k\in(-\pi/2,\pi/2)\setminus\{0\}$, are hypercycles. More precisely, for $x\in\reals$ and $y\in(-\pi/2,\pi/2)$,
		\[
		\cosh d(x,x+iy)=\frac{1}{\cos (y)}.
		\]
		\item \textbf{Equations of the geodesics relative to $\{y=0\}$} (see Figure \ref{fig:band}). Every geodesic other than $\{y=0\}$ is of one of the following three types.
		\begin{enumerate}[(a)]
			\item \emph{Ultraparallel geodesics.} A geodesic at positive distance from $\{y=0\}$ has an equation of the form\label{band:same}
			\[
			\cosh(x-x_0)=\alpha\sin (y), \qquad x_0\in\reals,\ |\alpha|>1.
			\]
			Its endpoints are $x_0\pm r+i\varepsilon\pi/2$, where $\varepsilon$ is the sign of $\alpha$ and $\cosh r=|\alpha|$, and its point closest to $\{y=0\}$ is $x_0+iy_0$, where $\sin y_0=1/\alpha$.
			\item \emph{Intersecting geodesics.} A geodesic intersecting $\{y=0\}$ has an equation of the form\label{band:cross}
			\[
			\sinh(x-x_0)=\beta\sin (y), \qquad x_0\in\reals,\ \beta\in\reals.
			\]
			Its endpoints are $x_0+s+i\pi/2$ and $x_0-s-i\pi/2$, where $\sinh s=\beta$. It meets $\{y=0\}$ at $x_0$, and the angle $\theta\in(0,\pi)$ from the positive direction of $\{y=0\}$ to the geodesic satisfies $\cot\theta=\beta$. The case $\beta=0$ gives the geodesics of item (\ref{band:orth}).
			\item \emph{Asymptotic geodesics.} A geodesic asymptotic to $\{y=0\}$ has an equation of the form\label{band:infty}
			\[
			\sin(y)=\varepsilon e^{-\delta(x-x_0)}, \qquad x_0\in\reals,\ \varepsilon,\delta\in\{-1,1\}.
			\]
			Its endpoints are $x_0+i\varepsilon\pi/2$ and $+\infty$ if $\delta=1$, or $-\infty$ if $\delta=-1$.
		\end{enumerate}
	\end{enumerate}

	\begin{figure}[htbp]
	\centering
	\begin{tikzpicture}[scale=0.92,line join=round]

	  \draw[thick] (-5.2,pi/2) -- (5.2,pi/2) node[right] {$y=\frac{\pi}{2}$};
	  \draw[thick] (-5.2,-pi/2) -- (5.2,-pi/2) node[right] {$y=-\frac{\pi}{2}$};
	  \node[left] at (-5.2,0) {$-\infty$};
	  \node[right] at (5.2,0) {$+\infty$};

	  \draw[thick,dashed,green!55!black] (-5.2,0.6425) -- (5.2,0.6425);
	  \draw[thick,dashed,green!55!black] (-5.2,-0.6425) -- (5.2,-0.6425);
	  \node[green!55!black,above,font=\footnotesize] at (4.7,0.6425) {$y=y_0$};
	  \node[green!55!black,below,font=\footnotesize] at (-4.7,-0.6425) {$y=-y_0$};

	  \draw[very thick,red] (-5.2,0) -- (5.2,0);
	  \node[red,above,font=\footnotesize] at (-4.7,0) {$y=0$};

	  \draw[very thick,blue,domain=-90:90,samples=80,smooth,variable=\t]
	    plot ({-2.8+1.1*sin(\t)},{rad(asin(min(1,cosh(1.1*sin(\t))/cosh(1.1))))});
	  \draw[very thick,blue,domain=-90:90,samples=80,smooth,variable=\t]
	    plot ({2.8+1.1*sin(\t)},{-rad(asin(min(1,cosh(1.1*sin(\t))/cosh(1.1))))});

	  \draw[very thick,blue] (-2.8,-pi/2) -- (-2.8,pi/2);

	  \draw[very thick,blue,domain=-90:90,samples=60,smooth,variable=\t]
	    plot ({ln(1.1752*sin(\t)+sqrt(1.1752*sin(\t)*1.1752*sin(\t)+1))},{rad(\t)});

	  \draw[very thick,blue,domain=2.34:90,samples=80,smooth,variable=\t]
	    plot ({2-ln(sin(\t))},{rad(\t)});
	  \draw[very thick,blue,domain=2.34:90,samples=80,smooth,variable=\t]
	    plot ({-2+ln(sin(\t))},{-rad(\t)});

	  \node[blue,font=\footnotesize] at (-4.3,1.05) {(\ref{band:same})};
	  \node[blue,font=\footnotesize] at (4.3,-1.05) {(\ref{band:same})};
	  \node[blue,right,font=\footnotesize] at (-2.8,1.25) {(\ref{band:orth})};
	  \node[blue,right,font=\footnotesize] at (1.0,1.0) {(\ref{band:cross})};
	  \node[blue,font=\footnotesize] at (3.6,0.42) {(\ref{band:infty})};
	  \node[blue,font=\footnotesize] at (-3.6,-0.42) {(\ref{band:infty})};

	  \fill (-2.8,0.6425) circle (1.3pt);   
	  \fill (2.8,-0.6425) circle (1.3pt);
	  \fill (-3.9,pi/2) circle (1.3pt);  \fill (-1.7,pi/2) circle (1.3pt);
	  \fill (1.7,-pi/2) circle (1.3pt);  \fill (3.9,-pi/2) circle (1.3pt);
	  \fill (-2.8,pi/2) circle (1.3pt);  \fill (-2.8,-pi/2) circle (1.3pt);
	  \fill (1,pi/2) circle (1.3pt);     \fill (-1,-pi/2) circle (1.3pt);
	  \fill (2,pi/2) circle (1.3pt);     \fill (-2,-pi/2) circle (1.3pt);

	\end{tikzpicture}
	\caption{Geodesics of the band model $\mathbb B$. The labels refer to the items of Section \ref{sec:band}. The geodesics of item (\ref{band:same}) are tangent to the hypercycles $\{y=\pm y_0\}$ at their points closest to $\{y=0\}$, and the vertical geodesic through one of these points is the common perpendicular. The geodesics of item (\ref{band:infty}) are asymptotic to $\{y=0\}$.}
	\label{fig:band}
	\end{figure}

	
	




%

\par 

\section{Vanishing asymptotic fineness}\label{sec:fineness}

Given $x\in S$, we denote by $\mathrm{Inj}(x)$ the injectivity radius of $S$ at $x$. The next proposition reduces the study of the vectors whose geodesic ray has vanishing injectivity radius, without diverging through a cusp, to three cases. Each case is governed by the geometry of the thin parts visited by the ray: short closed geodesics, cusps, or short geodesic loops enclosed by hypercycles.

\begin{proposition}\label{trichotomy}
	Let $S=\Gamma\backslash\H$ be a hyperbolic surface and let $u\in T^1S$. Assume that the geodesic ray $u(\reals^+)$ does not diverge through a cusp and that its injectivity radius is not bounded below, that is,
	\[
	\inf_{t\ge 0}\mathrm{Inj}\bigl(u(t)\bigr)=0.
	\]
	Then at least one of the following holds:
	\begin{enumerate}
		\item $u(\reals^+)$ crosses an infinite sequence of closed geodesics whose lengths tend to $0$,
		\item $u(\reals^+)$ intersects an infinite sequence of closed horocycles whose lengths tend to $0$,
		\item $u(\reals^+)$ intersects an infinite sequence of closed hypercycles whose lengths tend to $0$ but it does not meet the associated closed geodesics.
	\end{enumerate}
\end{proposition}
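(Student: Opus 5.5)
Take times $t_n\ge 0$ with $\mathrm{Inj}(u(t_n))\to 0$. If $(t_n)$ had a bounded subsequence, a limit point $t^*$ would give $\mathrm{Inj}(u(t^*))=0$, which is impossible since $\mathrm{Inj}$ is continuous and positive on $S$. Hence $t_n\to+\infty$. Set $x_n=u(t_n)$ and fix lifts $\tilde x_n$ on a fixed lift $\tilde c$ of the ray. Since $\mathrm{Inj}(x_n)\to 0$, there are nontrivial $\gamma_n\in\Gamma$ with $d(\tilde x_n,\gamma_n\tilde x_n)=2\,\mathrm{Inj}(x_n)\to 0$, and we may take $\gamma_n$ primitive. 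As $\Gamma$ is torsion-free, each $\gamma_n$ is parabolic or hyperbolic. Passing to a subsequence, we may assume all $\gamma_n$ are of the same type.

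\emph{Parabolic case.} Let $\xi_n$ be the fixed point of $\gamma_n$. The horocycle through $\tilde x_n$ centred at $\xi_n$ projects to a closed horocycle. The arc from $\tilde x_n$ to $\gamma_n\tilde x_n$ has length $2\sinh(d(\tilde x_n,\gamma_n\tilde x_n)/2)\to 0$, and this arc length is the length of the closed horocycle. So the ray meets closed horocycles of lengths tending to $0$. It remains to see that these are infinitely many distinct ones. Only finitely many $x_n$ can lie on one fixed closed horocycle: otherwise their lengths would be equal and could not tend to $0$, since a given closed horocycle has a fixed length. Passing to a subsequence gives (2). The hypothesis that the ray does not diverge through a cusp is not needed here. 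It only rules out the degenerate situation where all short loops come from a single cusp and the ray escapes into it. That situation still yields (2) formally, so the hypothesis is harmless.

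\emph{Hyperbolic case.} Let $c_n$ be the axis of $\gamma_n$. By \eqref{eq:displacement}, $\sinh(\ell(\gamma_n)/2)\cosh d(\tilde x_n,c_n)\to 0$, so $\ell(\gamma_n)\to 0$. By \eqref{eq:arc}, the closed hypercycle $\mathcal H(\gamma_n,\tilde x_n)$ has length $\ell(\gamma_n)\cosh d(\tilde x_n,c_n)\le 2\sinh(\ell(\gamma_n)\cosh d(\tilde x_n,c_n)/2)\to 0$. (If $\tilde x_n\in c_n$, this "hypercycle" is the closed geodesic itself.)

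Now split further. If infinitely many of the projected closed geodesics $\Gamma c_n$ are met by the ray, then, since $\ell(\gamma_n)\to 0$, the ray meets an infinite sequence of closed geodesics with lengths tending to $0$. These geodesics are pairwise distinct after extraction, because their lengths tend to $0$. One must also get \emph{crossing}, not mere tangency or inclusion. If the ray were contained in a closed geodesic, that geodesic would be fixed, and its length could not tend to $0$. A ray that meets a geodesic without being contained in it crosses it transversally. This gives (1).

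Otherwise, for $n$ large the ray avoids the closed geodesics $\Gamma c_n$ but meets the closed hypercycles of $\gamma_n$ at $x_n$. Distinctness again follows from the lengths tending to $0$, and this gives (3).

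The main obstacle I expect is the bookkeeping in these last steps: getting infinitely many \emph{distinct} curves, and justifying the word ``crosses'' in (1). Both are handled as above by the fact that a fixed closed curve has fixed positive length, so a sequence of lengths tending to $0$ forces distinctness. Ensuring that the cusp hypothesis is consistent with the parabolic case also needs a careful remark.
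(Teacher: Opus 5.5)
Your proof is correct and follows the paper's argument: take short loops $\gamma_n$ at $u(t_n)$, split into parabolic versus hyperbolic, and in the hyperbolic case split according to whether the ray meets the closed geodesics of the $\gamma_n$ infinitely often. Your remark on the cusp hypothesis is also right, since the paper uses it only to upgrade (2) to cusp-recurrence for the later application.

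One slip to fix: the bound $\ell(\gamma_n)\cosh d(\tilde x_n,c_n)\le 2\sinh\bigl(\ell(\gamma_n)\cosh d(\tilde x_n,c_n)/2\bigr)\to 0$ is circular as written. Apply $x\le 2\sinh(x/2)$ to $\ell(\gamma_n)$ alone, as the paper does, to get $\ell(\gamma_n)\cosh d(\tilde x_n,c_n)\le 2\sinh(\ell(\gamma_n)/2)\cosh d(\tilde x_n,c_n)=2\sinh\bigl(d(\tilde x_n,\gamma_n\tilde x_n)/2\bigr)\to 0$.
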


\begin{proof}
	Since $\inf_{t\ge 0}\mathrm{Inj}(u(t))=0$, there exists a sequence of times $t_n\ge 0$ with $\mathrm{Inj}(u(t_n))\to 0$.

	Fix a lift $\tilde u\in T^1\H$ of $u$. The injectivity radius at $u(t_n)$ equals half the length of the shortest geodesic loop based at $u(t_n)$, so there exists $\gamma_n\in\Gamma\setminus\{\mathrm{id}\}$ such that
	\[
	\varepsilon_n:=d\bigl(\tilde u(t_n),\gamma_n\tilde u(t_n)\bigr)=2\,\mathrm{Inj}\bigl(u(t_n)\bigr)\longrightarrow 0.
	\]
	Since $\Gamma$ is torsion free, each $\gamma_n$ is either parabolic or hyperbolic. After passing to a subsequence, either all the $\gamma_n$ are parabolic, or all of them are hyperbolic.

	\medskip
	\noindent\textbf{Parabolic case.}
	Assume that every $\gamma_n$ is parabolic. Let $x_n\in\partial\H$ be its fixed point and let $\tilde H_n$ be the horocycle centered at $x_n$ passing through $\tilde u(t_n)$. Denote by $\ell(\tilde H_n,\gamma_n)$ the length of the horocyclic arc of $\tilde H_n$ between $\tilde u(t_n)$ and $\gamma_n\tilde u(t_n)$. A direct computation in the upper half-plane model shows that
	\[
	\sinh\!\left(\frac{\varepsilon_n}{2}\right)=\frac{\ell(\tilde H_n,\gamma_n)}{2},
	\]
	hence $\ell(\tilde H_n,\gamma_n)\to 0$. The horocycle $\tilde H_n$ projects to a closed horocycle of $S$ of length at most $\ell(\tilde H_n,\gamma_n)$ passing through $u(t_n)$. So $u(\reals^+)$ intersects a sequence of closed horocycles whose lengths tend to $0$. As $u(\reals^+)$ does not diverge through a cusp, this shows that $u$ is cusp-recurrent in the sense of \cite{BurniolDalbo2026}, and we are in case (2).

	\medskip
	\noindent\textbf{Hyperbolic case.}
	Assume that every $\gamma_n$ is hyperbolic. Let $c_n$ be its axis and $\ell(\gamma_n)$ its translation length.
	Clearly, $\ell(\gamma_n)\le d(\tilde u(t_n), \gamma_n \tilde u(t_n))=\varepsilon_n$ tends to $0$ when $n\to \infty$.

	We distinguish two subcases.

	\medskip
	\emph{Subcase (i): $u(\reals^+)$ meets the closed geodesic obtained by projecting $c_n$ onto $S$ for infinitely many $n$.}
	Passing to a subsequence, this holds for every $n$. These closed geodesics have length at most $\ell(\gamma_n)$. Since $\ell(\gamma_n)\to 0$, we are in case (1).

	\medskip
	\emph{Subcase (ii): $u(\reals^+)$ meets the closed geodesic obtained by projecting $c_n$ onto $S$ for at most finitely many $n$.}
	Passing to a subsequence, $u(\reals^+)$ does not meet these closed geodesics for any $n$. The hypercycle $\mathcal H(\gamma_n,\tilde u(t_n))$ projects to a closed hypercycle in $S$ passing through $u(t_n)$. Let $\gamma_n'$ be a generator of the stabilizer of $c_n$ in $\Gamma$, so that $\ell(\gamma_n')\le\ell(\gamma_n)$. By \eqref{eq:arc}, the length of the projection of $\mathcal H(\gamma_n,\tilde u(t_n))$ is 
	\[
	\ell(\gamma_n')\cosh(d(\tilde u(t_n),c_n)).
	\]
	Using the inequality $x\le 2\sinh(x/2)$ and \eqref{eq:displacement}, we obtain
	\[
	\ell(\gamma_n')\cosh(d(\tilde u(t_n),c_n))\le\ell(\gamma_n)\cosh(d(\tilde u(t_n),c_n))\le   2\sinh\left(\frac{\varepsilon_n}{2}\right),
	\]
	which tends to $0$ when $n\to +\infty$. 
	Hence we are in case (3).
	



\end{proof}

We now deduce Theorem \ref{thm1} from Proposition \ref{trichotomy}, Theorem \ref{thm2} and the previous works.

\begin{myproof}{Theorem}{\ref{thm1}}
Let $S$ be a hyperbolic surface and $u\in T^1S$. Since $\mathrm{Inj}$ is continuous and positive on $S$, the asymptotic fineness of $u(\R^+)$ vanishes if and only if $\inf_{t\ge 0}\mathrm{Inj}(u(t))=0$.

Assume first that the asymptotic fineness of $u(\R^+)$ is positive. Then Bellis proved that $H^{ss}u=W^{ss}u$ \cite[Th\'eor\`eme D]{BellisThesis2018}. If the geodesic ray $u(\R^+)$ diverges through a cusp, then $H^{ss}u$ is a closed horocycle around that cusp and the equality $H^{ss}u=W^{ss}u$ is clear.

Assume now that the asymptotic fineness of $u(\R^+)$ vanishes and that $u(\R^+)$ does not diverge through a cusp. By Proposition \ref{trichotomy}, at least one of the following cases occurs.
\begin{enumerate}
	\item The ray $u(\R^+)$ crosses closed geodesics whose lengths tend to $0$. The conclusion follows from \cite[Main Theorem]{BurniolDalboHerrero2026}.
	\item The ray $u(\R^+)$ intersects closed horocycles whose lengths tend to $0$, so $u$ is cusp-recurrent and the conclusion follows from \cite[Theorem 2]{BurniolDalbo2026}.
	\item The ray $u(\R^+)$ intersects closed hypercycles whose lengths tend to $0$. The conclusion follows from Theorem \ref{thm2}.
\end{enumerate}
In all cases, $H^{ss}u\subsetneq W^{ss}u$ and $W^{ss}u$ is an uncountable union of stable horocycles.
\end{myproof}

The following example shows that case (3) of Proposition \ref{trichotomy} can occur while cases (1) and (2) do not.

\begin{example}\label{example_case3}
	There exist a hyperbolic surface $S$ without cusps and a vector $u\in T^1S$ such that
	\begin{itemize}
		\item $\liminf_{t\ge 0}\mathrm{Inj}\bigl(u(t)\bigr)=0$,
		\item for every sequence of closed geodesics $(c_k)$ with $\ell(c_k) \to 0$, we have $d\bigl(c_k,u(\R^+)\bigr)\to +\infty$.
	\end{itemize}

	Let $\ell_n$ be a sequence of positive numbers converging to $0$ and let $D>0$. Let $P_n$ be the pair of pants with boundary geodesic components of lengths $\ell_n$, $D$ and $D$ \cite[Chapter~3]{Buser1992}. For $n\ge 0$, $P_n$ is glued to $P_{n+1}$ through a geodesic boundary of length $D$. Then funnels are attached to the remaining boundary components. We thus obtain a complete hyperbolic surface with no cusps called a flute surface.
	
	Let $\alpha_n$ be the boundary component of $P_n$ of length $\ell_n$. 
	Let $k_n$ be a sequence of integers. We consider a vector $u\in T^1S$ whose geodesic ray $u(\R^+)$ has the following behavior:
	\begin{itemize}
		\item it starts in $P_0$, that is, $u(0)\in P_0$,
		\item it visits the pairs of pants $P_0, P_1, P_2,\dots $ in this order,
		\item for every $n\ge 1$, in $P_n$ the ray turns $k_n$ times around $\alpha_n$ and does not turn around the other two boundary geodesics.
	\end{itemize}

	By choosing the numbers of turns $k_n$ appropriately, we can guarantee that the distance between $u(\R^+)$ and $\alpha_n$ tends to infinity, while $u(\R^+)$ visits points of the collar of $\alpha_n$ where the injectivity radius is small, so that the injectivity radius along the ray vanishes.

	To conclude one shows that there exists a constant $\varepsilon_0>0$ such that every closed geodesic of $S$ distinct from the geodesics $\alpha_n$ has length at least $\varepsilon_0$. 
	
\end{example}


\section{Winding around a closed hypercycle}\label{sec:winding}

\begin{definition}
	Let $\gamma$ be an hyperbolic isometry of $\mathbb{H}$ with axis $c$ and let $\alpha: \reals \to \mathbb{H}$ be a unit speed parametrization of a hypercycle of $c$. The orientation of $\alpha $ is positive with respect to $\gamma$ if there exists $D>0$ such that for all $t\in \reals$,
	\[
	\gamma \alpha (t) = \alpha ( t+ D).
	\]
\end{definition}

\begin{definition}
	Let $\gamma$ be an hyperbolic isometry of $\mathbb{H}$ with axis $c$ an let $\tilde u\in T^1 \mathbb{H}$ such that $d(c,\tilde u(\reals))>0$. Let $q=\tilde u(t_1)$ be the point in $\tilde u(\reals)$ that realizes the distance to $c$. Let $\alpha: \reals \to \mathbb{H}$ be the positive unit speed parametrization of the hypercycle of $c$ through $q$ such that $\alpha(0)=q$. We say that $\tilde u$ is in \emph{good position} with respect to $\gamma$ if $t_1>0$ and 
	\[
	\frac{\mathrm{d}\tilde u}{\mathrm{d} t}(t_1) =\frac{\mathrm{d}\alpha}{\mathrm{d} t}(0).
	\]
\end{definition}

\begin{definition}
	Let $\tilde u\in T^1 \H$ be in good position with respect to a hyperbolic isometry $\gamma$. 
	We define the \emph{winding of $\tilde u$ around} $\gamma$ (see Figure \ref{pic1}), denoted by $\mathrm{Wind}_{\gamma}(\tilde{u})\in T^{1}\mathbb{H}$, as the unique vector satisfying
	\begin{itemize}
		\item $\mathrm{Wind}_{\gamma}(\tilde {u})(0)=\tilde{u}(0)$,
		\item $\mathrm{Wind}_{\gamma}(\tilde{u})(+\infty)= \gamma \, \tilde{u}(+\infty) $.
	\end{itemize}
\end{definition}

 


\begin{figure}[htbp]
\centering
\begin{tikzpicture}[scale=3,line join=round]

  \draw[thick] (0,0) circle (1);

  \draw[very thick,red,midarrow=0.75] (230:1) arc[start angle=-40,end angle=40,radius=1.19175];
  \node[red,left] at (-0.524,0.596) {$c$};

  \draw[very thick,blue,dashed] (-50:1) arc[start angle=220,end angle=200,radius=1.19175];
  \draw[very thick,blue,midarrow=0.6] (0.43584,-0.40760) arc[start angle=200,end angle=140,radius=1.19175];
  \node[blue,right] at (0.524,0.596) {$\tilde u(\mathbb R)$};

  \draw[dashed,thick,gray] (-0.36397,0) -- (0.36397,0);

  \draw[very thick,orange!85!black,midarrow=0.6] (0.43584,-0.40760) arc[start angle=-142.831,end angle=-170.400,radius=3.18617];
  \node[orange!85!black,right] at (-0.10,0.70) {$\mathrm{Wind}_\gamma(\tilde u)$};

  \draw[->,thick] (58:1.09) arc[start angle=58,end angle=92,radius=1.09];
  \node at (75:1.18) {$\gamma$};

  \fill (0.36397,0) circle (0.6pt) node[right] {$q$};
  \fill (-0.36397,0) circle (0.5pt);
  \fill (0.43584,-0.40760) circle (0.6pt) node[right] {$\tilde u(0)$};

  \fill (130:1) circle (0.6pt) node[above left] {$\gamma^+$};
  \fill (230:1) circle (0.6pt) node[below left] {$\gamma^-$};
  \fill (50:1) circle (0.6pt) node[right] {$\tilde u(+\infty)$};
  \fill (-50:1) circle (0.6pt) node[below right] {$\tilde u(-\infty)$};
  \fill (99.6:1) circle (0.6pt) node[above left] {$\gamma\tilde u(+\infty)$};

\end{tikzpicture}
\caption{The winding $\mathrm{Wind}_\gamma(\tilde u)$ of $\tilde u$ around $\gamma$, in the disk model.}
\label{pic1}
\end{figure}

\begin{definition}\label{def_winding_time}

The \emph{winding time} associated to $\gamma$ is the real number $\tau_{\gamma,\tilde u}$ defined by
$$
\tau_{\gamma,\tilde u}=B_{\tilde u(+\infty)}(\gamma^{-1}\tilde u(0), \tilde u(0)).
$$

\end{definition}


 \begin{proposition}[bound of the winding time]\label{bound_wind}
	Let $\tilde u\in T^1 \mathbb{H}$ be in good position with respect to a hyperbolic isometry $\gamma$. Let $q=\tilde u(t_1)$ be the point in $\tilde u(\reals)$ that realizes the distance to the axis $c$ of $\gamma$.  	
 	Then
 	\[
 	\left| \tau_{\gamma,\tilde u} \right| \leq d(q, \gamma q).
 	\]
 	
 \end{proposition}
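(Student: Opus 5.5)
The plan is to express $\tau_{\gamma,\tilde u}$ as a limit of differences of distances along the ray $\tilde u(t)$ and to compare everything with the point $q$ by triangle inequalities. Write $\xi=\tilde u(+\infty)$ and $x=\tilde u(0)$. Since $q=\tilde u(t_1)$ with $t_1>0$, we have $d(x,q)=t_1$ and $d(q,\tilde u(t))=t-t_1$ for $t\ge t_1$. Using the ray $\tilde u$ itself in the definition of $B_\xi$ and the invariance of $d$ under $\gamma$,
\[
\tau_{\gamma,\tilde u}=\lim_{t\to+\infty}\bigl(d(\gamma^{-1}x,\tilde u(t))-d(x,\tilde u(t))\bigr)=\lim_{t\to+\infty}\bigl(d(x,\gamma\tilde u(t))-t\bigr).
\]
So the statement is equivalent to the two-sided estimate $\bigl|d(x,\gamma\tilde u(t))-t\bigr|\le d(q,\gamma q)+o(1)$ as $t\to+\infty$.

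\emph{Upper bound.} Going through $q$ and $\gamma q$ gives
\[
d(x,\gamma\tilde u(t))\le d(x,q)+d(q,\gamma q)+d(\gamma q,\gamma\tilde u(t))=t_1+d(q,\gamma q)+(t-t_1),
\]
hence $\tau_{\gamma,\tilde u}\le d(q,\gamma q)$. This direction uses neither good position nor the minimality of $q$.

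\emph{Lower bound.} We need $d(x,\gamma\tilde u(t))\ge t-d(q,\gamma q)-o(1)$. The naive estimate $d(x,\gamma\tilde u(t))\ge d(\gamma x,\gamma\tilde u(t))-d(x,\gamma x)=t-d(x,\gamma x)$ is insufficient. Indeed, $x$ is farther from the axis $c$ than $q$, so by \eqref{eq:displacement} we have $d(x,\gamma x)\ge d(q,\gamma q)$. This is exactly where good position must enter.

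Equivalently, using the cocycle identity,
\[
\tau_{\gamma,\tilde u}=B_{\gamma\xi}(x,q)+B_\xi(\gamma^{-1}q,q)-t_1 .
\]
Since $|B_\xi(\gamma^{-1}q,q)|\le d(q,\gamma q)$, it suffices to show $B_{\gamma\xi}(x,q)\ge t_1$. In words: the backward point $x$ must be at least $t_1$ farther from the horocycles centred at $\gamma\xi$ than $q$ is. Since $d(x,q)=t_1$, this means the geodesic ray from $x$ through $q$ must make a non-obtuse angle with the direction towards $\gamma\xi$, which is plausible because $\gamma$ pushes $\xi$ further along the direction of motion of $\tilde u$ at $q$.

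To verify this, I would pass to the band model of Section \ref{sec:band}. Take $c=\{y=0\}$, $\gamma(z)=z+\ell(\gamma)$ and $q=iy_0$ with $y_0>0$. Then $\tilde u$ is the ultraparallel geodesic $\cosh x'=\alpha\sin y'$ with $\sin y_0=1/\alpha$, traversed in the direction of increasing real part, since it is tangent at $q$ to the hypercycle $\{y=y_0\}$ in the positive direction. Its forward endpoint is $\xi=r+i\pi/2$ with $\cosh r=\alpha$, and $\gamma\xi=r+\ell(\gamma)+i\pi/2$. The point $x$ lies on the branch of $\tilde u$ with negative real part. I would check $B_{\gamma\xi}(x,q)\ge t_1$ using the horocycles based at points of $\{y=\pi/2\}$, for instance after mapping back to $\H$ by $z\mapsto ie^z$, where these become circles tangent to the real axis. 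Alternatively, I would show that the angle at $q$ between $\tilde u'(t_1)$ and the ray from $q$ to $\gamma\xi$ is at most $\pi/2$, and then conclude by convexity of horoballs.

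I expect this explicit lower-bound computation, and in particular confirming the inequality $B_{\gamma\xi}(x,q)\ge t_1$ exactly rather than only up to an error, to be the main obstacle. If it turns out to fail exactly, the fallback is to compute $d(x,\gamma\tilde u(t))$ directly in the band model from the equation of $\tilde u$ and compare with $t-d(q,\gamma q)$.
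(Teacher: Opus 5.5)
Your upper bound is correct; it is the same triangle-inequality argument as in the paper.

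\textbf{The gap is in your lower bound.} The inequality you reduce to, $B_{\gamma\xi}(x,q)\ge t_1$, is false in every case the proposition covers. You always have $B_{\gamma\xi}(x,q)\le d(x,q)=t_1$, with equality only if $q$ lies on the geodesic ray from $x$ to $\gamma\xi$. But $q\neq x$ (since $t_1>0$) lies on the ray from $x$ to $\xi$. Moreover $\gamma\xi\neq\xi$, because $\tilde u(\mathbb{R})$ stays at positive distance from the axis, so $\xi$ is not fixed by $\gamma$. Two distinct rays from $x$ meet only at $x$, so $B_{\gamma\xi}(x,q)<t_1$ strictly.

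Your geometric reading of the condition is also off. Let $\theta$ be the angle at $q$ between $\tilde u'(t_1)$ and the direction to $\gamma\xi$. Convexity of $s\mapsto B_{\gamma\xi}(\tilde u(t_1-s),q)$ gives only $B_{\gamma\xi}(x,q)\ge t_1\cos\theta<t_1$. The non-obtuse angle condition you mention gives merely $B_{\gamma\xi}(x,q)\ge 0$. Neither is enough. Splitting through $q$ and bounding the middle term crudely by $B_\xi(\gamma^{-1}q,q)\ge -d(q,\gamma q)$ throws away exactly the slack you need. Your fallback, computing $d(x,\gamma\tilde u(t))$ explicitly in the band model, is not carried out, so the lower bound is unproved as it stands.

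\textbf{The missing idea} is to run the cocycle decomposition through a point of the winding ray $[x,\gamma\xi)$ rather than through $q$. Let $q'$ be the point of that ray closest to the axis $c$. Then
\[
\tau_{\gamma,\tilde u}=B_{\gamma\xi}(x,q')+B_\xi(\gamma^{-1}q',q')+B_\xi(q',x)\ge d(x,q')-d(q',\gamma q')-d(q',x)=-d(q',\gamma q').
\]
This works because the first term now equals $d(x,q')$ exactly, since $q'$ lies on the ray from $x$ to $\gamma\xi$. It therefore cancels the worst case of the third term.

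Good position is used to get $d(q',c)\le d(q,c)$. Since $\gamma\xi$ lies between $\xi$ and the attracting fixed point $\gamma^+$, the ray from $x$ to $\gamma\xi$ is turned further toward the axis than $\tilde u(\mathbb{R}^+)$. Hence it passes at least as close to $c$ as $q$ does, and \eqref{eq:displacement} then gives $d(q',\gamma q')\le d(q,\gamma q)$. This is how the paper proves the lower bound.
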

 \begin{proof}
 	As $q$ belongs to $\tilde u(\R^+)$, we have
 	$$
 	B_{\tilde u(+\infty)}(q,\tilde u(0))=-d(q,\tilde u(0)).
 	$$
 	Thus
 	\begin{align*}
 		B_{\tilde u(+\infty)}(\gamma^{-1}\tilde u(0),& \tilde u(0)) = \\ 
 		& =  B_{\tilde u(+\infty)}(\gamma^{-1}\tilde u(0),\gamma^{-1}q)+  
 		B_{\tilde u(+\infty)}(\gamma^{-1}q,q) +B_{\tilde u(+\infty)}(q,\tilde u(0)) \\
 		& \leq  d(\gamma^{-1}\tilde u(0),\gamma^{-1}q)+d(\gamma^{-1} q,q) -d(q,\tilde u(0))\\
 		& = d( q, \gamma q).
 	\end{align*}
 	
 	For the other inequality, we observe that $\gamma \tilde u (+\infty)$ is between $\tilde u (+ \infty)$ and the attrating fixed point $x_\gamma^+$ of $\gamma$. Then the the geodesic ray $[\tilde u(0), \gamma \tilde u(+\infty))$ is closer than $\tilde u (\reals^+)$ to the axis $c$ of $\gamma$. Let $q'$ be the point of $[\tilde u(0), \gamma \tilde u(+\infty))$ closest to $c$. We have $d(q',c)\le d(q,c)$. Then $d(q',\gamma q')\le d(q, \gamma q)$.

 	Since $q'$ is in the geodesic ray $[\tilde u(0), \gamma \tilde u(+\infty))$, we have
	\[
	B_{\tilde u(+\infty)}(\gamma ^{-1}\tilde u(0), \gamma ^{-1} q') =
	B_{\gamma \tilde u(+\infty)}(\tilde u(0),q')=d(\tilde u(0),q'). 
	\]

 	Therefore 	
 	\begin{align*}
 		B_{\tilde u(+\infty)} &(\gamma^{-1}\tilde u(0), \tilde u(0)) = \\ 
		&= B_{\tilde u(+\infty)}(\gamma^{-1}\tilde u(0),\gamma ^{-1}q')+B_{\tilde u(+\infty)}(\gamma ^{-1} q',q')+B_{\tilde u(+\infty) }( q',\tilde u(0)) \\
 		& \geq d(\tilde u(0),q')-d(\gamma ^{-1} q', q')-d( q',\tilde u(0))\\
 		&=  -d( q', \gamma q') \ge -d( q, \gamma q).
 	\end{align*}
 \end{proof}
 
 \begin{proposition}[Key Proposition]\label{key_prop} 
 	Let $\tilde u\in T^1 \H$ be in good position with respect to a hyperbolic isometry $\gamma$. Let $q$ be the point in $\tilde u(\reals)$ closest to the axis of $\gamma$. Assume that $d(q,\gamma q)\le 1$.	
 	Then, for every $t\ge 0$,
 	\begin{align*}
 		\min \{ d_1\!\left(g_{t+\tau_{\gamma ,\tilde u}}\bigl(\operatorname{Wind}_{\gamma}(\tilde u)\bigr),\, g_t\tilde u\right),
 		d_1\!\left(g_{t+\tau_{\gamma ,\tilde u}}\bigl(\operatorname{Wind}_{\gamma}(\tilde u)\bigr),\, \gamma g_t\tilde u\right)
 		\}
 		\\ \le 17 d(q,\gamma q).
 	\end{align*}
 	
 \end{proposition}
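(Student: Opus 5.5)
The approach is to split the time axis at the moment $t_1$ when $\tilde u$ passes through $q$. Before $t_1$ I compare $\operatorname{Wind}_\gamma(\tilde u)$ with $\tilde u$, and after $t_1$ with $\gamma\tilde u$; the second half is controlled by the monotonicity in Lemma \ref{decreasing}. Write $w=\operatorname{Wind}_\gamma(\tilde u)$ and $\delta=d(q,\gamma q)\le 1$.

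\emph{Step 1: $g_\tau w$ and $\gamma\tilde u$ lie on the same stable horocycle}, with $\tau=\tau_{\gamma,\tilde u}$. Since $w(+\infty)=\gamma\tilde u(+\infty)$ and $w(0)=\tilde u(0)$, equivariance of Busemann functions gives
\[
B_{\gamma\tilde u(+\infty)}\bigl(w(0),\gamma\tilde u(0)\bigr)=B_{\tilde u(+\infty)}\bigl(\gamma^{-1}\tilde u(0),\tilde u(0)\bigr)=\tau .
\]
Hence $g_\tau w\in H^{ss}(\gamma\tilde u)$. Both summands of $d_1(g_{t+\tau}w,\gamma g_t\tilde u)$ are distances between points of two geodesics in the same weak stable leaf, so by Lemma \ref{decreasing} this quantity is non-increasing in $t$. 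For $t\ge t_1$ it therefore suffices to bound it at $t=t_1$, i.e.\ to bound $d(w(t_1+\tau+s),\gamma\tilde u(t_1+s))$ for $s\in\{0,1\}$.

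\emph{Step 2: geometry near $q$, in the band model.} Put the axis on $\{y=0\}$, so that $\gamma$ is $z\mapsto z+\lambda$ and $q=x_0+iy_0$. Then $\tilde u(\R)$ is the ultraparallel geodesic $\cosh(x-x_0)=\alpha\sin y$ and $\gamma\tilde u(\R)$ is $\cosh(x-x_0-\lambda)=\alpha\sin y$. These two curves meet at the point $p$ with abscissa $x_0+\lambda/2$. Elementary estimates show that $d(p,q)\le\delta$ and $d(p,\gamma q)\le \delta$, and that the angle $\theta$ between the two geodesics at $p$ satisfies $\theta\le C\delta$. The geodesic $w$ joins $\tilde u(0)$, which lies on $\tilde u$ before $p$ because $t_1>0$ by good position, to $\gamma\tilde u(+\infty)$. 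So $w$ is the third side of a triangle whose angle at $p$ is $\pi-\theta$. Comparison with a Euclidean-like estimate in hyperbolic triangles (or a direct computation) gives $d(p,w)\le C\theta\le C'\delta$. From this I would deduce that the points of $w$ near $p$ are within $O(\delta)$ both of $\tilde u$ (before $p$) and of $\gamma\tilde u$ (after $p$).

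\emph{Step 3: time matching and $t\le t_1$.} I would identify the parameter of $w$ at its closest point to $p$ using the Busemann identity of Step 1 and Proposition \ref{bound_wind}, which gives $|\tau|\le\delta$. The parameter mismatches then cost only additive $O(\delta)$, which bounds $d_1(g_{t_1+\tau}w,\gamma g_{t_1}\tilde u)$ by a constant times $\delta$. For $0\le t\le t_1$, the geodesics $w$ and $\tilde u$ start at the same point and arrive within $O(\delta)$ of each other near $p$. Convexity of the distance function along geodesics then gives $d(w(t),\tilde u(t))\le O(\delta)$ on $[0,t_1]$ (plus $|\tau|$ for the shift), and also at $t+1$ since the error only grows boundedly over unit time. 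Collecting the constants should give $17\delta$.

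The main obstacle is Step 2: getting an explicit linear bound $d(p,w)\le C\delta$ with a usable constant. I would try it first by an explicit computation in the band model, writing the equation of $w$ through $\tilde u(0)$ and the endpoint $x_0+\lambda+r+i\pi/2$. If that is too messy, I would fall back on the hyperbolic law of cosines in the triangle $(\tilde u(0),p,\gamma\tilde u(+\infty))$.
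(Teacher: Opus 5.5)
Your route is genuinely different from the paper's, and the overall plan is sound. The paper never uses the intersection point $p$. Instead it computes in the band model the point $q'$ of $w(\R)$ closest to the axis, and gets $d(q,q')\le 2d(q,\gamma q)$ from the endpoint inequalities $0<y_0'<y_0$, $0<r'-r<\lambda/2$, $0<x_0'<\lambda$. It then compares the \emph{unshifted} $g_tw$ with $g_t\tilde u$ for $t\le t_1$ and with $\gamma g_t\tilde u$ for $t\ge t_1$, treats $t\in(t_1-1,t_1)$ separately, and pays $2|\tau|$ only at the end. Your Step 1 is correct ($g_\tau w\in H^{ss}(\gamma\tilde u)$) and is a cleaner organizing principle. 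Lemma \ref{decreasing} then applies directly to the shifted comparison, so $\tau$ only costs you on the $\tilde u$ side, and pivoting at $p$ gives a better constant. As written, however, two justifications fail, and the constant $17$ is asserted rather than derived.

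\textbf{First gap: the bound on $d(p,w)$.} A \emph{Euclidean-like} estimate in the triangle $(\tilde u(0),p,\gamma\tilde u(+\infty))$ only gives $d(p,w)\lesssim d(\tilde u(0),p)\,\theta$. This is not $O(\delta)$, because $t_1$ is unbounded. The bound must come from negative curvature. The ray $[\tilde u(0),\gamma\tilde u(+\infty))$ separates $p$ from the geodesic $(\tilde u(-\infty),\gamma\tilde u(+\infty))$. The rays from $p$ to these two endpoints make angle $\pi-\theta$, so the angle of parallelism gives $\tanh h=\sin(\theta/2)$, where $h$ is the distance from $p$ to that geodesic. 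In your band normalization, the slopes at $p$ together with $\sinh(\lambda/2)=\cos(y_0)\sinh(\delta/2)$ give exactly $\sin(\theta/2)=\sin(y_0)\sinh(\delta/2)$. This also shows that $p$ exists when $\delta\le1$. Hence $\epsilon:=d(p,w)\le\operatorname{artanh}(\sinh(\delta/2))\le 0.6\,\delta$, uniformly in $t_1$.

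\textbf{Second gap: growth over unit time.} The claim that \emph{the error only grows boundedly over unit time} does not hold in the form you need. For rays from a common point at angle $\phi$, one has $\sinh(d(s)/2)=\sinh(s)\sin(\phi/2)$. Passing from $s_p=d(\tilde u(0),p)$ to $s_p+1$ therefore multiplies by $\sinh(s_p+1)/\sinh(s_p)$, which is unbounded as $t_1\to 0$. You would need the extra input $\phi<\theta$, which follows from Gauss--Bonnet in the same triangle. More simply, use the paper's device in the window $s_p-1<t<\sigma_p$, where $\gamma\tilde u(\sigma_p)=p$. Pass through $\gamma\tilde u(t+1)$ and bound $d(\tilde u(t+1),\gamma\tilde u(t+1))\le 2e\sinh(\delta/2)$ by \eqref{eq:displacement}, since $d(\tilde u(t+1),c)\le 1+d(q,c)$.

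\textbf{The constant.} With these repairs your three regimes give:
\begin{itemize}
\item $4\epsilon$ for $t\ge\sigma_p$;
\item $2(2\epsilon+\delta)$ for $t\le s_p-1$;
\item $4\epsilon+\delta+2e\sinh(\delta/2)$ in between.
\end{itemize}
That is roughly $6\delta$, well below $17\delta$. But this bookkeeping is the actual content of the statement, and it cannot be left as \emph{should give $17\delta$}.
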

 \begin{proof}
 	For simplicity, we denote $\tilde v=\mathrm{Wind}_{\gamma}(\tilde u)$ and $\tau=\tau_{\gamma ,\tilde u}$.
 	Let $q=\tilde u(t_1)$, $t_1\ge 0$, be the point that realizes the distance of $\tilde u(\reals)$ to the axis of $\gamma$. We observe that $\tilde u(0)$ and $\gamma \tilde u (+\infty)$ lie in the same half-plane in $\mathbb H \setminus (\gamma^-,\gamma^+)$. Hence, the geodesic generated by $\tilde v$ does not intersect the axis of $\gamma$. Let $q' = \tilde v (t_1')$, $t_1'\in \reals$, be the point that realizes the distance of $\tilde v(\reals)$ to the axis of $\gamma$. The first step is to control the distance between $q$ and $q'$.
 	
 	To do the next computations we use the band model $\mathbb B$ (see Section \ref{sec:band}). We can assume that the axis of $\gamma$ is the geodesic $y=0$ and that $\gamma$ has the form $\gamma(z)= z+\lambda$, where $\lambda>0$ is the translation length of $\gamma$. 
	
	
	The geodesics at positive distance from $y=0$ are those of item (\ref{band:same}). 
	Without loss of generality, we can assume that the equation of $\tilde u(\reals)$ has $x_0=0$ and $\alpha>1$. Hence,
	\[
	\tilde u(\reals) = \{ (x,y) \in \reals \times (-\pi/2,\pi/2) \, | \, \cosh (x) = \alpha \sin (y) \},
	\]
	which lies in the upper half-strip $y>0$. The endpoints at infinity of $\tilde u (\reals)$ are
	\[
	\tilde u(-\infty) = -r + \pi/2 i, \quad \tilde u(+\infty) = r + \pi/2 i,
	\] where $r>0$ satisfies $\cosh(r)=\alpha$. The point $q$ realizing the distance to the axis $y=0$ is $q= y_0 i$ with $\alpha = 1/\sin(y_0)$. The initial point $\tilde u(0)$ lies between $\tilde u(-\infty)$ and $q$, so it has negative real part.
	
	The geodesic $\tilde v(\reals)$ is also at positive distance from the axis $y=0$ and in the upper half-strip.
	Let \[
	\cosh(x-x_0') = \alpha ' \sin (y), \quad x_0'\in \reals, \, \alpha ' >1,
	\]
	be the equation of $\tilde v(\reals)$. The endpoints of $\tilde v(\reals )$ are 
	\[
	\tilde v (-\infty ) = x_0'-r' + \pi/2 i, \quad \tilde v(+\infty) = x_0'+r' + \pi/2 i,
	\]
	where $r'>0$ satisfies $\cosh(r')=\alpha'$. The point $q'$ realizing the distance to the axis is $q' = x_0' + y_0' i$ with $\alpha' = 1/\sin(y_0')$.

	By definition, $\tilde v(+\infty) = \gamma \tilde u(+\infty)$, which implies $ r + \lambda = x_0' +r'$. The point $\tilde v(-\infty)$ must lie between $-r +\pi/2 i$ and $\pi/2 i$. 
	
	
 	
 	From the position of the endpoints of $\tilde u(\reals)$ and $\tilde v(\reals)$, we obtain the following estimates 
 	\begin{enumerate}
 		\item $0<y_0'<y_0$,\label{eq1}
 		\item $0<r'-r<\lambda/2$, \label{eq2}
 		\item $0<x_0' < \lambda$.\label{eq3}
 	\end{enumerate}

	Consider the function $f(t)= \arcsin(1/\cosh(t))$ defined for $t\in \reals$. We have $f(r)=y_0$ and $f(r')=y_0'$. An easy computation shows that $f$ is $1$-Lipschitz, hence 
	\[
	|y_0- y_0'|\le |r-r'| <\frac{\lambda}{2}.
	\]

 	Introducing $q''= x_0' + y_0 i$, we have
 	\[
 	d(q,q')\le d(q, q'' ) + d(q'',q').
 	\]
 	
 	First, we notice that $\gamma q = \lambda + y_0 i$. Since $0<x_0'<\lambda $, the point $q''= x_0' + y_0 i$ belongs to the hypercycle segment between $q$ and $\gamma q$. This implies that $d(q,q'')\le d(q,\gamma q)$.
 	
 	The points $q''$ and $q'$ lie on the vertical geodesic $\{x=x_0'\}$ (item (\ref{band:orth})). Hence $d(q'',q')$ is the length of the vertical segment joining them and, since $0<y_0'<y_0$,
 	\[
 	d( q'', q' )=\int_{y_0'}^{y_0}\frac{dt}{\cos(t)}\le \frac{y_0-y_0'}{\cos(y_0)}\le \frac{\lambda/2}{\cos(y_0)}.
 	\]
 	
 	By item (3), $\cosh(d(0,q))=1/\cos(y_0)$. Since $d(0,q)$ is the distance from $q$ to the axis of $\gamma$, \eqref{eq:displacement} gives
 	\[
 	\sinh (\frac{d(q,\gamma q)}{2}) = \cosh(d(0,q)) \sinh(\frac{\lambda}{2}).
 	\]
 	These together imply
 	 	\[
 	 	d( q'', q' )\le  \frac{\lambda/2}{\cos(y_0)} \le \sinh(\lambda/2) \cosh (d(0,q)) = \sinh (\frac{d(q,\gamma q)}{2}).
 	 	\]
 	If $0\le x\le 1$ then $\sinh(x/2)\le x$. Since $d(q,\gamma q)\le 1$, we obtain
 	\[
 	d(q,q') \le 2 d(q, \gamma q).
 	\]

  	
 	The rest of the proof follows from the bound of the distance between $q$ and $q'$. Recall that $q=\tilde u (t_1)$ and $q'=\tilde v (t'_1)$. By the triangular inequality,
 	\[
 	|t_1 - t'_1 | \le d(q,q' ) \le 2 d(q,\gamma q).
 	\]
 	
 	We now bound the distance between the basepoints of the geodesics of $\tilde u$ and $\tilde v$.
 	
 	\noindent\textbf{Case 1.}  If $0\le t\le t_1$, then 
 	\[
 	d\bigl(\tilde v(t),\tilde u(t)\bigr)\le 4 d(q,\gamma q).
 	\] 

 Since the distance in $\mathbb{H}$ between two geodesic rays starting at the same point is increasing, it follows 
 \[
  	\forall\,0\le t\le t_1,\qquad
  	d\bigl(\tilde v(t),\tilde u(t)\bigr)\le d\bigl(\tilde v(t_1),\tilde u(t_1)\bigr).
  	\]
 
Using the bounds established above, one obtains
 	\[
 	d\bigl(\tilde v(t_1),\tilde u(t_1)\bigr) \le 
 	|t_1 - t'_1 | + d\bigl(\tilde v(t'_1),\tilde u(t_1)\bigr) \le  2 d\bigl(q',q\bigr) \le 4 d(q,\gamma q),
  	\]
which implies the statement.

 	\smallskip
 	\noindent\textbf{Case 2:} If $ t\ge  t_1$, then 
 	\[
 	d\bigl(\tilde v(t), \gamma \tilde u(t)\bigr)\le 5d(q,\gamma q).
 	\] 
 	
 	Since $\tilde v$ and $\gamma \tilde u$ have the same point at infinity, the distance $d\bigl(\tilde v(t), \gamma \tilde u(t)\bigr)$ is decreasing (Lemma \ref{decreasing}), and hence for $t\ge t_1$,
 	\[
 	d\bigl(\tilde v(t), \gamma\tilde u(t)\bigr)
 	\le d\bigl(\tilde v(t_1),\gamma \tilde u(t_1)\bigr).
 	\]
 	Moreover,
 	\[
 	d\bigl(\tilde v(t_1), \gamma \tilde u(t_1)\bigr)
 	\le |t_1-t_1'| + d\bigl(\tilde v(t_1'), \gamma \tilde u(t_1) \bigr) \le |t_1-t_1'| + d(q', q) + d(q,\gamma q)
 	\]
 	By the bounds proved above, the first two terms on the right hand side are bounded by $2 d(q,\gamma q)$.
 	We conclude that, for $t\ge t_1$, $d\bigl(\tilde v(t), \gamma \tilde u(t)\bigr)  \le 5d(q,\gamma q)$. 
 	
 	\smallskip
 	
 Let us now compute the bounds for the distance $d_1$. Recall that 
 \[
d_1\!\left(g_{t} \tilde v ,\, g_t\tilde u\right) = d(\tilde v (t), \tilde u (t)) + d(\tilde v (t+1), \tilde u (t+1)). 
 \]
 We can distinguish three scenarios:
 
 \begin{enumerate}[(a)]
 	\item If $t \le t_1 -1 $, by the first case, we have that both
 	\[
 	d(\tilde v(t), \tilde u(t))
 	\quad \text{and} \quad
 	d(\tilde v(t+1), \tilde u(t+1))
 	\]
 	are bounded by $4 d(q,\gamma q)$. Then
 	\[
 	d_1(g_{t}\tilde v, g_t \tilde u) \le 8 d(q,\gamma q).
 	\]
 	
 	\item If $t \ge t_1$, by the second case, we can
 	bound both
 	\[
 	d(\tilde v(t), \gamma \tilde u(t))
 	\quad \text{and} \quad
 	d(\tilde v(t+1), \gamma \tilde u(t+1))
 	\]
 	by $5d(q,\gamma q)$. Then
 	\[
 	d_1(g_{t}\tilde v, \gamma g_t \tilde u) \le 10 d(q,\gamma q).
 	\]
 	
 	\item Let $t_1 -1 < t < t_1$. We have
 	\begin{align*}
 		d_1(g_{t}\tilde v, g_t \tilde u) &= d(\tilde v (t), \tilde u (t)) + d(\tilde v (t+1), \tilde u (t+1)) \\
 		&\le  d(\tilde v (t), \tilde u (t)) + d(\tilde v (t+1), \gamma \tilde u (t+1)) + d( \gamma \tilde u (t+1), \tilde u (t+1)).
 	\end{align*}
	The first and the second terms in the last line are bounded by $4d(q,\gamma q)$ and $5 d(q,\gamma q)$ by Cases 1 and 2 above, respectively. 
 	
	In order to bound the third term we apply \eqref{eq:displacement}. Writing $z= \tilde u(t+1)$ and $c$ for the axis of $\gamma$, we obtain
 	\begin{equation*}
 		\sinh\!\left(\frac{d(z, \gamma z)}{2}\right)
 		=
 		\cosh \bigl(d(z, c)\bigr)
 		\sinh\!\left(\frac{\ell(\gamma)}{2}\right),
 	\end{equation*}
	\begin{equation*}
 		\sinh\!\left(\frac{d(q, \gamma q)}{2}\right)
 		=
 		\cosh \bigl(d(q, c)\bigr)
 		\sinh\!\left(\frac{\ell(\gamma)}{2}\right).
 	\end{equation*}

 We notice that \[ d(z, c )  \le d(z, q )+ d(q,c) = t+1 - t_1 + d(q,c) \le 1 + d(q,c).\] We obtain 
 \[
 \sinh\!\left(\frac{d(z, \gamma z)}{2}\right) \leq e \sinh\!\left(\frac{d(q, \gamma q)}{2}\right).
 \]

 Since $d(q,\gamma q)\le 1$, we have
 \[
 d(z, \gamma z) \le 2 \sinh\!\left(\frac{d(z, \gamma z)}{2}\right) \le 2 e \sinh\!\left(\frac{d(q, \gamma q)}{2}\right) \le 2 e d(q,\gamma q)\le 6 d(q,\gamma q).
 \] 

 We get
 \[
 	\forall \, t_1 -1 < t < t_1, \quad d_1(g_{t}\tilde v, g_t \tilde u) \le 15 d(q,\gamma q).
 	\]
 	
\end{enumerate}

Finally, we observe that 
\begin{align*}
	d_1\!\left(g_{t+\tau} \tilde v ,\, g_t\tilde u\right) & \le d_1\!\left(g_{t+\tau} \tilde v ,\, g_t\tilde v\right) + d_1\!\left(g_{t} \tilde v ,\, g_t\tilde u\right)\\
	& =  2|\tau| + d_1\!\left(g_{t} \tilde v ,\, g_t\tilde u\right) \\
	&\le  2d(q,\gamma q) + d_1\!\left(g_{t} \tilde v ,\, g_t\tilde u\right),
\end{align*}
thanks to Proposition \ref{bound_wind}. Similarly we have
\[
d_1\!\left(g_{t+\tau} \tilde v ,\, \gamma g_t\tilde u\right) \le 2 d(q,\gamma q) + d_1\!\left(g_{t} \tilde v ,\, \gamma g_t\tilde u\right).
\]
Together with the previous bounds, these imply the statement.
 \end{proof}


\section{Proof of Theorem \ref{thm2}}\label{sec:proof}

\subsection{Rays intersecting arbitrarily short hypercycles}\label{sec:vanishing}

Let $S=\Gamma\backslash\H$ be a hyperbolic surface and let $u\in T^1S$ be such that the geodesic ray $u(\R^+)$ meets closed hypercycles of arbitrarily small length. Notice that each closed hypercycle has a closed geodesic in its free homotopy class, which is shorter. The case where $u(\R^+)$ meets infinitely many of these closed geodesics falls in the first case of Proposition \ref{trichotomy} and the conclusion of Theorem \ref{thm2} follows from \cite{BurniolDalboHerrero2026}. Therefore, we may assume that $u(\R^+)$ does not meet the closed geodesics associated to these hypercycles.

Recall that $\mathcal H(\gamma,q)$ denotes the hypercycle of $\gamma$ through $q$ (Section \ref{sec:hypercycles}). If $\gamma$ is in good position with respect to $\tilde u$ and $q$ is the point of $\tilde u(\reals)$ closest to the axis of $\gamma$, then $\tilde u(\reals)$ is tangent to $\mathcal H(\gamma,q)$ at $q$.

\begin{figure}[htbp]
\centering
\begin{tikzpicture}[scale=1.6,>=Stealth,line join=round]

  \draw[->] (-1.3,0) -- (5.8,0) node[below right] {$\partial\H=\mathbb{R}$};

  \draw[very thick,blue,->] (0,0) -- (0,5.25);
  \node[blue,left] at (0,4.9) {$\tilde u(\mathbb{R})$};
  \draw[very thick,blue,->] (0,3.0) -- (0,3.7);

  \draw[very thick,red] (1,0) arc (180:0:1.5);
  \node[red] at (2.5,0.72) {$c_n$};

  \draw[very thick,green!55!black] (1,0) arc (233.13:-53.13:2.5);
  \node[green!55!black,above] at (2.5,4.5) {$\mathcal H(\gamma_n,q_n)$};

  \draw[dashed,thick,gray] (0,2) arc (90:36.87:2);

  \fill (0,2) circle (1.5pt) node[left] {$q_n$};      
  \fill (1.6,1.2) circle (1.3pt);                     
  \fill (0,1) circle (1.2pt) node[left] {$i$};        
  \fill (1,0) circle (1.3pt) node[below] {$\gamma_n^-$};
  \fill (2,0) circle (1.1pt) node[below] {$x_n$};
  \fill (4,0) circle (1.3pt) node[below] {$\gamma_n^+$};

\end{tikzpicture}
\caption{The geodesic $\tilde u(\mathbb{R})$, the axis $c_n$ of $\gamma_n$, and the hypercycle $\mathcal H(\gamma_n,q_n)$ tangent to $\tilde u(\mathbb{R})$ at $q_n$.}
\label{fig:hypercycle}
\end{figure}

\begin{proposition}\label{normalization}
	Let $S=\Gamma\backslash\H$ be a hyperbolic surface and let $u\in T^1S$ be such that $u(\R^+)$ meets closed hypercycles of arbitrarily small length without meeting the associated closed geodesics. Then, up to conjugating $\Gamma$ by an isometry of $\H$, there exist a lift $\tilde u$ of $u$ with $\tilde u(0)=i$ and $\tilde u(+\infty)=\infty$, and a sequence $(\gamma_n)_{n\ge 0}$ of hyperbolic isometries in $\Gamma$ such that
	\begin{enumerate}
		\item $\tilde u$ is in good position with respect to each $\gamma_n$,
		\item the point $q_n=\tilde u(t_n)$ of $\tilde u(\reals)$ closest to the axis of $\gamma_n$ is such that $(t_n)_{n\ge 0}$ is increasing and tends to $+\infty$,
		\item $0<\gamma_n^-<\gamma_n^+<\gamma_{n+1}^-$ for all $n\ge 0$,
		\item the hypercycles $\mathcal H(\gamma_n,q_n)$ are pairwise disjoint,
		\item $d(q_n,\gamma_n q_n)$ tends to $0$.
	\end{enumerate}
\end{proposition}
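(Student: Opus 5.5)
Normalize first, then construct the $\gamma_n$ one at a time. Conjugate $\Gamma$ so that a lift $\tilde u$ satisfies $\tilde u(0)=i$ and $\tilde u(+\infty)=\infty$; then $\tilde u(\reals)$ is the imaginary axis, oriented upward. By hypothesis there are times $s_k\to+\infty$ and elements $\gamma_k\in\Gamma$ such that $\tilde u(s_k)$ lies on the hypercycle $\mathcal H(\gamma_k,\tilde u(s_k))$, its projection has length $\to 0$, and the axis $c_k$ does not meet $\tilde u(\reals^+)$.

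\emph{Step 1: distance between the geodesic and $c_k$.} Suppose $c_k$ meets $\tilde u(\reals)$. The intersection must be at a negative time. Then $d(\tilde u(s_k),c_k)$ is at least comparable to $s_k$ times a factor depending on the angle, which would make the hypercycle length $\ell(\gamma_k)\cosh d(\tilde u(s_k),c_k)$ large unless $\ell(\gamma_k)$ is very small. I would rather avoid this subtlety. For all large $k$, the injectivity radius at $\tilde u(0)$ is a fixed positive constant while $\ell(\gamma_k)\to 0$. By discreteness, only finitely many axes $c_k$ with short translation length pass within bounded distance of $i$, and a short geodesic crossing the imaginary axis at a negative time would force the collar to contain a long piece of the ray. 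I will argue that after discarding finitely many $k$, $c_k$ is disjoint from $\tilde u(\reals)$, possibly after replacing $u$ by $g_{T}u$ for some $T$ and renormalizing. This step is the main obstacle. My first attempt will use the collar lemma: disjoint short closed geodesics have disjoint wide collars, and the ray crossing a collar around $c_k$ crosses $c_k$.

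\emph{Step 2: good position and endpoints.} Once $d(c_k,\tilde u(\reals))>0$, Proposition~\ref{closest_points} gives the closest point $q_k=\tilde u(t_k)$ and the hypercycle $\mathcal H(\gamma_k,q_k)$ tangent to $\tilde u(\reals)$ at $q_k$. Since $d(q_k,c_k)\le d(\tilde u(s_k),c_k)$, \eqref{eq:displacement} together with $x\le 2\sinh(x/2)$ gives
\[
d(q_k,\gamma_k q_k)\le 2\sinh\bigl(\tfrac12 d(\tilde u(s_k),\gamma_k\tilde u(s_k))\bigr)\to 0 .
\]
This is item (5). By discreteness, $q_k$ cannot stay in a compact set, since the displacement tends to $0$ there. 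Also $t_k\to-\infty$ is impossible because the hypercycle must reach $\tilde u(s_k)$ with $s_k\to+\infty$; one checks that $|t_k-s_k|$ is controlled. Hence $t_k\to+\infty$, and after passing to a subsequence $t_k$ is increasing and positive. Replacing $\gamma_k$ by $\gamma_k^{-1}$ if needed fixes the orientation so that $\tilde u$ is in good position.

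\emph{Step 3: ordering and disjointness.} The axis $c_k$ lies on one side of the imaginary axis. A reflection $z\mapsto-\bar z$ conjugating $\Gamma$ places infinitely many of them on the right, giving $0<\gamma_k^-<\gamma_k^+$. Since $q_k\to\infty$ along the imaginary axis while the hyperbolic size of the tangent hypercycle region stays bounded, the Euclidean intervals $[\gamma_k^-,\gamma_k^+]$ go to infinity. More precisely, the Euclidean radius is comparable to $\mathrm{Im}\,q_k$ and must exceed the previous right endpoint. A subsequence therefore has $\gamma_n^+<\gamma_{n+1}^-$, which is item (3). For item (4), I will choose the subsequence sparse enough that the Euclidean discs bounded by $\mathcal H(\gamma_n,q_n)$ are disjoint. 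These discs are tangent to the imaginary axis at heights $\mathrm{Im}\,q_n$ that grow geometrically fast, and the hypercycle through $q_n$ is nearly the circle of radius about $\mathrm{Im}\,q_n$. If that is inconvenient, I will instead use the collar lemma for the distinct short geodesics $c_n$, projected to $S$.
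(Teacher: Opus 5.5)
Your proposal has two genuine gaps, and both are at places where the paper uses the discreteness of $\Gamma$.

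\textbf{First gap: positivity of $t_k$.} You leave Step 1 open. Your reason in Step 2 for excluding $t_k\to-\infty$ (that $|t_k-s_k|$ is controlled) is unproved and false in general. When $\ell(\gamma_k)$ is much smaller than $\varepsilon_k:=d(\tilde u(s_k),\gamma_k\tilde u(s_k))$, the ray can stay arbitrarily long in the region where $\gamma_k$ moves points by less than $\varepsilon_k$. The fix needs neither collars nor replacing $u$ by $g_Tu$; the latter is not permitted anyway, since the normalized lift must be a lift of $u$ itself. The argument runs as follows.
\begin{enumerate}
\item Pass to a subsequence of pairwise distinct $\gamma_k$, which is possible since $\ell(\gamma_k)\to0$.
\item Let $q_k=\tilde u(t_k)$ be the point of $\tilde u(\reals)$ closest to $c_k$, allowing $q_k\in c_k$. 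The function $t\mapsto d(\tilde u(t),c_k)$ is non-decreasing on $[t_k,+\infty)$.
\item So if $t_k\le 0\le s_k$, then $d(i,c_k)\le d(\tilde u(s_k),c_k)$, and \eqref{eq:displacement} gives $d(i,\gamma_k i)\le\varepsilon_k$. By discreteness this holds for only finitely many $k$.
\item Hence $t_k>0$ eventually, so $q_k\in\tilde u(\reals^+)$ does not lie on $c_k$. This gives at once $d(c_k,\tilde u(\reals))>0$ and the condition $t_k>0$ needed for good position. Bounded $(t_k)$ is then excluded as you say.
\end{enumerate}

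\textbf{Second gap: item (3), and with it your main argument for (4).} You claim the hypercycle region has bounded hyperbolic size and that the Euclidean radius of $\mathcal H(\gamma_k,q_k)$ is comparable to $\mathrm{Im}\,q_k$. Both are false.
\begin{enumerate}
\item Only $\ell(\gamma_k)\cosh d(q_k,c_k)\to0$ is known, and $d(q_k,c_k)$ may tend to infinity, as in Example \ref{example_case3}.
\item The circle is tangent to the imaginary axis at $ie^{t_k}$, so $\gamma_k^-\gamma_k^+=e^{2t_k}$ and the radius is $(\gamma_k^-+\gamma_k^+)/2$. Geometry alone gives $\gamma_k^+>e^{t_k}\to+\infty$, but nothing in your argument stops $\gamma_k^-$ from staying bounded. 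In that case (3) fails for every subsequence, however sparse the tangency heights.
\end{enumerate}
You need the paper's discreteness argument. If $\gamma_k^-\to\xi\in[0,+\infty)$ along a subsequence, the axes converge to the geodesic from $\xi$ to $\infty$. For $z$ on it and $z_k\in c_k$ with $z_k\to z$, one gets $d(z,\gamma_k z)\le 2d(z,z_k)+\ell(\gamma_k)\to0$, a contradiction. Once $\gamma_k^-\to+\infty$, the Euclidean distance from $0$ to $\mathcal H(\gamma_k,q_k)$ is at least $(\sqrt2-1)\gamma_k^-$, and a sparse subsequence gives (3) and (4) together.

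Your collar-lemma fallback for (4) can be made to work. Closed hypercycles of small length lie in the standard collars of their short geodesics, and these collars are pairwise disjoint once the lengths are pairwise distinct. But you do not carry this out, and you would still have to deduce (3) from it.
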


\begin{proof}
By hypothesis, there are times $s_n\ge0$ such that $u(s_n)$ belong to a hypercycle whose length tend to $0$ when $n$ tends to $+\infty$.

Up to conjugating the group $\Gamma$ by an isometry of $\H$, we can assume that $u$ lifts to $\tilde u \in T^1 \H$ such that $\tilde u(0)= i$ and $\tilde u(+\infty)= \infty$. For every $n$, there exists a primitive hyperbolic isometry $\gamma_n$ such that the hypercycle $\mathcal{H}(\gamma_n, \tilde u(s_n)) $ of $\gamma_n$ through $\tilde u(s_n)$ project to the closed hypercycles whose length tend to $0$. In particular, $\varepsilon_n:=d(\tilde u(s_n),\gamma_n\tilde u(s_n))$ is smaller than the length of the hypercycle, so it also tends to $0$.

Since $\ell(\gamma_n)\le \varepsilon_n\to 0$, up to passing to a subsequence, the isometries $\gamma_n$ can be taken to be pairwise distinct. It can also be assumed that their fixed points are different from $0$ and $\infty$.

	Let $c_n$ be the axis of $\gamma_n$. By hypothesis $\tilde u (\R^+)$ does not intersect $c_n$. 
	Since the fixed points of $\gamma_n$ are different from $0$ and $\infty$, the geodesics $\tilde u(\reals)$ and $c_n$ are not asymptotic. Let $q_n=\tilde u(t_n)$ be the point of $\tilde u(\reals)$ closest to $c_n$. By \eqref{eq:displacement}, the displacement $d(z,\gamma_n z)$ is smaller the closer $z$ lies to $c_n$. In particular,
	\[
	d(q_n,\gamma_n q_n)\le d\bigl(\tilde u(s_n),\gamma_n\tilde u(s_n)\bigr)=\varepsilon_n\longrightarrow 0.
	\]

	We claim that $t_n>0$ for $n$ large enough. Otherwise, if $t_n\le 0$, then $t_n\le 0\le s_n$, so $\tilde u(0)=i$ is closer to $c_n$ than $\tilde u(s_n)$. By the property of the displacement function, we would get
	\[
	d\bigl(i ,\gamma_n i \bigr)\le d\bigl(\tilde u(s_n),\gamma_n\tilde u(s_n)\bigr)=\varepsilon_n,
	\]
	which if it happened for infinitely many $n$ would contradict the discreteness of $\Gamma$. Up to taking a subsequence, we assume $t_n>0$. Moreover, $q_n$ does not lie on $c_n$, thus $\tilde u(\reals)$ stays at positive distance from $c_n$.

	Observe that $\tilde u (\reals)$ is tangent to the hypercycle of $\gamma_n$ through $q_n$. Replacing $\gamma_n$ by $\gamma_n^{-1}$ if necessary, the orientation of $\tilde u(\reals)$ agrees with the one induced by $\gamma_n$ on the hypercycle through $q_n$. Together with $t_n>0$, this shows that $\tilde u$ is in good position with respect to $\gamma_n$.

	Moreover, $t_n\to+\infty$. Otherwise, up to passing to a subsequence, the points $q_n$ stay in a compact subset of $\H$, and $d(q_n,\gamma_n q_n)\to 0$ with pairwise distinct $\gamma_n$ would contradict the discreteness of $\Gamma$. Up to passing to a subsequence, we may assume that $(t_n)_{n\ge 0}$ is increasing.

	The axis $c_n$ does not meet $\tilde u(\reals)$, hence it is contained in one of the two half-planes $\{x>0\}$ or $\{x<0\}$. Up to taking a subsequence and conjugating again the group $\Gamma$ by the reflection $z\mapsto-\bar z$, which fixes $\tilde u(\reals)$ pointwise and preserves good position, we may assume that they all lie in $\{x>0\}$. Then $\gamma_n^->0$ and $\gamma_n^+>0$, and good position forces the attracting fixed point to be on the right of the repelling one, so that $0<\gamma_n^-<\gamma_n^+$.

	The geodesic realizing the distance between $\tilde u(\reals)$ and $c_n$ is orthogonal to both, and it passes through the point $q_n$. Hence it is a Euclidean half-circle centered at $0$. One of the end points $x_n$ of this geodesic is between $\gamma_n^-$ and $\gamma_n^+$, so $\gamma_n^-< x_n < \gamma_n^+$. By definition, $q_n=\tilde u(t_n)=i e^{t_n}$, then $x_n= e^{t_n}$ tends to $+\infty$, and so does $\gamma_n^+$. We claim that $\gamma_n^-$ also tends to $+\infty$. Otherwise, up to passing to a subsequence, we may assume that $\gamma_n^-$ converges to some $\xi \in \reals $. Then the axes $c_n$ converge to the geodesic with endpoints $\xi$ and $+\infty$. Picking a point $z$ on this geodesic, it is accumulated by points $z_n $ on $c_n$. Since $\ell (\gamma_n)$ tends to $0$, we have $d(z, \gamma_n z)\to 0$, which contradicts the fact that $\Gamma$ is discrete. Hence, we must have $\lim_n \gamma_n^- = +\infty$.

	\medskip
	The hypercycle $\mathcal H(\gamma_n,q_n)$ is an arc of the Euclidean circle through $\gamma_n^-$, $\gamma_n^+$ and $q_n$, which is tangent to the imaginary axis at $q_n$ (see Figure \ref{fig:hypercycle}).
	 Then the Euclidean distance between $0$ and $\mathcal H(\gamma_n,q_n)$ tends to $+\infty$. For this we can observe that this distance is greater than the distance from $0$ to the Euclidean circle of radius $\gamma_n^-$ centered at $\gamma_n^-+ \gamma_n^- i $.
	This allows us to extract a subsequence such that the hypercycles $\mathcal H(\gamma_n,q_n)$ are pairwise disjoint and $\gamma_n^+<\gamma_{n+1}^-$ for all $n\ge 0$.
\end{proof}

From now on we fix the vector $u\in T^1S$, its lift $\tilde u$ with $\tilde u(0)=i$ and $\tilde u(+\infty)=\infty$, and the sequence $(\gamma_n)_{n\ge 0}$ of hyperbolic isometries given by Proposition \ref{normalization}.

\subsection{Construction of the vector \texorpdfstring{$w_\alpha$}{w-alpha} in \texorpdfstring{$\overline{H^{ss} u}$}{the closure of Hss u}}\label{subsec_construction}

Let $(\alpha_n)_{n\ge 0}$ be a subsequence of $(\gamma_n)_{n\ge 0}$. We write $\beta_n=\alpha_0\cdots\alpha_n\in\Gamma$, and we denote by $\alpha_n^-,\alpha_n^+$ the repelling and attracting fixed points of $\alpha_n$, so that $0<\alpha_n^-<\alpha_n^+<\alpha_{n+1}^-$. For each $n\ge 0$, let $q_n^\alpha$ be the point of $\tilde u(\reals)$ closest to the axis of $\alpha_n$, and set
\[
\delta_n^\alpha=d(q_n^\alpha,\alpha_n q_n^\alpha).
\]
Since $(\alpha_n)$ is a subsequence of $(\gamma_n)$, we have $\delta_n^\alpha\to 0$. We introduce the vectors $\tilde v_n\in T^1\H$ defined by
\[
\tilde v_n(0)=i,\qquad \tilde v_n(+\infty)=\beta_n(\infty).
\]

\begin{lemma}\label{lemma_vectors}
	The sequence $(\tilde v_n)_{n\ge 0}$ converges to a vector $\tilde v_\alpha\in T^1\H$ defined by $\tilde v_\alpha(0)=i$ and $\tilde v_\alpha(+\infty)=\lim_{n\to +\infty}\beta_n(\infty)$.
\end{lemma}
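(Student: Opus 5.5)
Since a unit vector based at $i$ is determined by the forward endpoint of its geodesic, and this endpoint depends continuously (in fact homeomorphically) on the vector in the unit circle of directions at $i$, the lemma reduces to showing that the sequence of boundary points $\xi_n:=\beta_n(\infty)\in\partial\H$ converges. Our plan is to show that $\xi_n$ is a bounded monotone sequence of real numbers. The limit vector $\tilde v_\alpha$ is then the vector at $i$ pointing to $\lim\xi_n$.

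The key observation concerns where $\alpha_n$ sends points near $\infty$. For every $n$, $\infty$ lies outside the interval $[\alpha_n^-,\alpha_n^+]$, and $\alpha_n$ is hyperbolic with attracting point $\alpha_n^+$. So $\alpha_n$ maps the closed arc of $\partial\H$ from $\alpha_{n}^+$ through $\infty$ to $\alpha_n^-$ (the complement of $(\alpha_n^-,\alpha_n^+)$, containing $\infty$) into itself, moving points towards $\alpha_n^+$. In particular $\alpha_n(\infty)\in(\alpha_n^+,+\infty)$ is a real number greater than $\alpha_n^+$.

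More importantly, using $0<\alpha_n^-<\alpha_n^+<\alpha_{n+1}^-$, we show by induction the nesting
\[
\beta_{n+1}(\infty)=\beta_n\bigl(\alpha_{n+1}(\infty)\bigr)\in \beta_n\bigl([\alpha_{n}^+,+\infty]\bigr).
\]
We define intervals $I_n=\beta_n([\alpha_{n+1}^-,+\infty])$ with $[\alpha_{n+1}^-,+\infty]\subset [\alpha_n^+,+\infty]$. Since $\alpha_{n+1}$ preserves $[\alpha_{n+1}^-,+\infty]\cup\{\infty\}$ minus the open interval, and in fact maps $[\alpha_{n+1}^+,+\infty]$ into $[\alpha_{n+1}^+,\alpha_{n+1}(\infty)]\subset [\alpha_{n+2}^-,+\infty]$ needs care, we may instead use the more robust arcs $J_n=\beta_n(A_n)$, where $A_n\subset\partial\H$ is the closed arc containing $\infty$ between $\alpha_{n+1}^-$ and $0$ (going through $+\infty$ and negatives). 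Because $A_{n+1}$, which lies outside $(\alpha_{n+2}^-,\alpha_{n+2}^+)\supset$ nothing relevant, is mapped by $\alpha_{n+1}$ into the complement of $(\alpha_{n+1}^-,\alpha_{n+1}^+)$, and that complement is contained in $A_n$ thanks to $0<\alpha_{n+1}^-$, we get $\alpha_{n+1}(A_{n+1})\subset A_n$. Hence $J_{n+1}\subset J_n$ is a nested sequence of closed arcs, each containing $\xi_{n+1}=\beta_{n+1}(\infty)$ (as $\infty\in A_{n+1}$).

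The main obstacle is to show that $\bigcap J_n$ is a single point, i.e.\ that the Euclidean (or visual from $i$) lengths of $J_n$ shrink to zero. Our first attempt is to use that $\alpha_{n+1}$ contracts strongly on $A_{n+1}$: $A_{n+1}$ avoids a neighbourhood of the repelling point in the visual sense, and $\ell(\alpha_n)\to0$ is the wrong direction, so instead we use the hypercycle disjointness from Proposition \ref{normalization}(4). Consider the half-planes $P_n$ bounded by the geodesic joining $0$ and $\alpha_{n+1}^-$... More concretely, we take the geodesics $\sigma_n$ with endpoints the endpoints of $J_n$; these are nested and pairwise disjoint, being images under $\beta_n$ of geodesics bounding disjoint regions containing the (pairwise disjoint) hypercycle neighbourhoods. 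If the arcs did not shrink to a point, the $\sigma_n$ would converge to a geodesic $\sigma$, and the $\beta_n$-images of fundamental pieces between consecutive $\sigma_n$ would accumulate at a compact piece of $\sigma$, contradicting discreteness of $\Gamma$ exactly as in the proof of Proposition \ref{normalization} (infinitely many distinct elements moving a point near $\sigma$ a bounded amount). Granting this, $\xi_n\to\xi:=\bigcap J_n$, and $\tilde v_n\to\tilde v_\alpha$ with $\tilde v_\alpha(0)=i$ and $\tilde v_\alpha(+\infty)=\xi$.
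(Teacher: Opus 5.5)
There is a genuine gap. You announce that you will show $(\xi_n)$ is monotone and bounded, but you never do this. After abandoning the intervals $I_n$, you pass to the nested arcs $J_n=\beta_n(A_n)$, and then everything rests on $\bigcap_n J_n$ being a single point, which you explicitly only grant. The discreteness sketch does not supply this step. It never exhibits infinitely many distinct elements of $\Gamma$ that move a fixed point near $\sigma$ by a bounded amount. If $p_n\in\sigma_n$ converge to $p\in\sigma$, the natural candidates $\beta_m\beta_n^{-1}$ would need $\beta_n^{-1}(p_n)$ and $\beta_m^{-1}(p_m)$ to stay at bounded distance. But these points lie on different geodesics, from $0$ to $\alpha_{n+1}^-$ and from $0$ to $\alpha_{m+1}^-$, and nothing relates them. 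The ``fundamental pieces'' are also never defined. Moreover, shrinking of $J_n$ is much stronger than the lemma: it forces $\beta_n(0)$ and $\beta_n(\alpha_{n+1}^-)$ to converge to the same limit as $\beta_n(\infty)$. Equivalently, $\beta_n^{-1}(i)$ must go arbitrarily deep into the half-disc cut out by the geodesic from $0$ to $\alpha_{n+1}^-$, which would require genuine estimates. Separately, your justification of $\alpha_{n+1}(A_{n+1})\subset A_n$ is wrong as written: the complement of $(\alpha_{n+1}^-,\alpha_{n+1}^+)$ contains $(0,\alpha_{n+1}^-)$, which is disjoint from $A_n$. The inclusion does hold, but for a different reason: $\alpha_{n+1}$ pushes $0$ toward $\alpha_{n+1}^+$ through the negative reals and $\infty$, so that $\alpha_{n+1}(A_{n+1})\subset[\alpha_{n+1}^+,+\infty]\cup[-\infty,0]$.

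Your discarded intervals $I_n=\beta_n([\alpha_{n+1}^-,+\infty])$ already finish the proof; the inclusion you were worried about (landing in $[\alpha_{n+2}^-,+\infty]$) is not the one needed. For $I_{n+1}\subset I_n$ you only need
\[
\alpha_{n+1}\bigl([\alpha_{n+2}^-,+\infty]\bigr)=\bigl[\alpha_{n+1}(\alpha_{n+2}^-),\,\alpha_{n+1}(\infty)\bigr]\subset(\alpha_{n+1}^+,+\infty)\subset[\alpha_{n+1}^-,+\infty].
\]
Each $\alpha_k$ is increasing on $(\alpha_k^+,+\infty]$ with image $(\alpha_k^+,\alpha_k(\infty)]$, and $\alpha_k^+<\alpha_{k+1}^-$. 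An induction then shows that $\beta_n$ is increasing on $[\alpha_{n+1}^-,+\infty]$ with values in $(\alpha_0^+,\alpha_0(\infty)]$. Hence $I_n=[\beta_n(\alpha_{n+1}^-),\xi_n]$ is a bounded real interval with right endpoint $\xi_n$, and the nesting gives $\alpha_0^+<\xi_{n+1}\le\xi_n$. So $(\xi_n)$ is non-increasing and bounded below, hence convergent, and no shrinking is required. This is exactly the paper's argument: it derives $0<\beta_n(\infty)<\beta_{n-1}(\infty)$ by applying $\alpha_{n-1},\alpha_{n-2},\dots,\alpha_0$ in turn to the inequality $\alpha_n(\infty)<\infty$, using that each of them preserves the order on the relevant interval $(\alpha_k^+,+\infty]$.
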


\begin{proof}
	Since every $\tilde v_n$ is based at $i$, it suffices to prove that the sequence $(\beta_n(\infty))_{n\ge 0}\subset\partial\H$ converges. Each $\alpha_i$ is a hyperbolic isometry with repelling and attracting fixed points $0<\alpha_i^-<\alpha_i^+$. Since $\infty$ lies outside $[\alpha_i^-,\alpha_i^+]$, it is attracted by $\alpha_i$ toward $\alpha_i^+$, so that $\alpha_i^+<\alpha_i\infty<+\infty$. Moreover, $\alpha_i$ preserves the order on the interval $(\alpha_i^+,+\infty)$, hence $\alpha_i^+<x<y$ implies $\alpha_i^+<\alpha_i x<\alpha_i y$. Using the nesting $\alpha_i^+<\alpha_{i+1}^-$, we obtain for every $n\ge 1$
	\[
	0<\alpha_{n-1}\alpha_n\infty<\alpha_{n-1}\infty .
	\]
	Applying $\alpha_0,\dots,\alpha_{n-2}$ successively, which preserve the order on the relevant intervals, yields
	\[
	0<\beta_n(\infty)<\beta_{n-1}(\infty).
	\]
	Hence $(\beta_n(\infty))_{n\ge 0}$ is a decreasing sequence of positive numbers, and therefore converges.
\end{proof}

The ray $\beta_n^{-1}\tilde v_n(\reals^+)$ issues from $\beta_n^{-1}i$ and points to $\beta_n^{-1}(\beta_n\infty)=\infty$; it is therefore the vertical geodesic ray starting at $\beta_n^{-1}i$. The next lemma places this ray in good position with respect to $\alpha_{n+1}$ and compares its displacement with $\delta_{n+1}^\alpha$. This is the geometric point where the present case departs from \cite{BurniolDalboHerrero2026}: the ray does not cross the axis of $\alpha_{n+1}$ but remains tangent to one of its hypercycles.

\begin{lemma}\label{good_position_vn}
	For every $n\ge 0$, the vector $\beta_n^{-1}\tilde v_n$ is in good position with respect to $\alpha_{n+1}$. Moreover, if $q_{n+1}'$ denotes the point of $\beta_n^{-1}\tilde v_n(\reals)$ closest to the axis of $\alpha_{n+1}$, then
	\[
	d(q_{n+1}',\alpha_{n+1}q_{n+1}')\le \delta_{n+1}^\alpha .
	\]
\end{lemma}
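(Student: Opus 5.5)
The plan is to work directly in the upper half-plane with the normalization of Proposition \ref{normalization}, and to reduce both claims to one statement about where the vertical ray $\beta_n^{-1}\tilde v_n(\reals^+)$ sits. Write $b_n=\beta_n(\infty)>0$ (positive by Lemma \ref{lemma_vectors}). The geodesic through $i$ with forward endpoint $b_n$ is the Euclidean semicircle centered on $\reals$ through $i$ and $b_n$. Hence its backward endpoint is $-1/b_n<0$. Applying $\beta_n^{-1}$, the geodesic $\beta_n^{-1}\tilde v_n(\reals)$ is the vertical line $\{\mathrm{Re}\,z=a_n\}$ with $a_n=\beta_n^{-1}(-1/b_n)$, oriented upward and starting at $\beta_n^{-1}i$.

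\textbf{Step 1: localize $a_n$.} I would aim to prove $0\le a_n<\alpha_{n+1}^-$, or more precisely $a_n\in[\alpha_n^-,\alpha_n^+]$, with $a_0$ handled by the same argument with $\beta_0=\alpha_0$. The tool is the same cyclic-order bookkeeping as in the proof of Lemma \ref{lemma_vectors}. Each $\alpha_i^{-1}$ fixes $\alpha_i^\pm$, preserves the two complementary arcs of $\partial\H\setminus\{\alpha_i^-,\alpha_i^+\}$, and pushes points toward $\alpha_i^-$. Peeling off $\alpha_0^{-1},\alpha_1^{-1},\dots$ one at a time, I would track simultaneously the images of $-1/b_n$ and of $b_n$; the latter becomes $\alpha_k\cdots\alpha_n\infty$ at the intermediate stages. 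The nesting $\alpha_i^+<\alpha_{i+1}^-$ should keep the pair of endpoints in controlled intervals at each stage, ending with the pair $(a_n,\infty)$.

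A complementary geometric check is the following. The geodesic $(-1/b_n,b_n)$ does not meet the axes of $\alpha_0,\dots,\alpha_n$, because all their fixed points lie in $(0,b_n)$. Tracking on which side of each axis the images lie then pins $a_n$ down. \emph{This step is where I expect the real difficulty}: the cyclic order arguments are easy to get wrong, and one must rule out $a_n<0$.

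\textbf{Step 2: the displacement bound.} Suppose $0\le a_n<\alpha_{n+1}^-$. The vertical line $\{\mathrm{Re}\,z=a_n\}$ then lies in the region between the imaginary axis and the axis $c_{n+1}$ of $\alpha_{n+1}$. Its distance to $c_{n+1}$ is therefore at most $d(\tilde u(\reals),c_{n+1})$. This can be seen, for instance, by moving to the band model of Section \ref{sec:band} with $c_{n+1}$ as $\{y=0\}$: both lines are of type (\ref{band:same}) in the same half-band, with nested endpoints, so the line $\{\mathrm{Re}\,z=a_n\}$ has the smaller $y_0$. Then \eqref{eq:displacement}, whose right-hand side is monotone in the distance to the axis, gives $d(q'_{n+1},\alpha_{n+1}q'_{n+1})\le d(q_{n+1}^\alpha,\alpha_{n+1}q_{n+1}^\alpha)=\delta_{n+1}^\alpha$.

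\textbf{Step 3: good position.} The axis $c_{n+1}$ lies to the right of the line, with repelling point $\alpha_{n+1}^-$ to the left of $\alpha_{n+1}^+$, and the line is oriented upward. The same argument used for $\tilde u$ in Proposition \ref{normalization} then shows that the tangent direction at $q'_{n+1}$ agrees with the positive orientation of the hypercycle $\mathcal H(\alpha_{n+1},q'_{n+1})$. It remains to show that $q'_{n+1}$ is hit at positive time, i.e.\ that $\beta_n^{-1}i$ lies below it. The point $q'_{n+1}$ has Euclidean height comparable to $\sqrt{(\alpha_{n+1}^--a_n)(\alpha_{n+1}^+-a_n)}$, being the foot of the common perpendicular, a semicircle centered at $a_n$. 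I would compare this with the height of $\beta_n^{-1}i$. That height should be controlled by the fact that $\beta_n^{-1}i$ lies near the hypercycle $\mathcal H(\alpha_n,q_n^\alpha)$ region, which by item (4) of Proposition \ref{normalization} is disjoint from and below $\mathcal H(\alpha_{n+1},q_{n+1}^\alpha)$. If this comparison proves awkward, the fallback is to use the Key Proposition \ref{key_prop} inductively: it places $\beta_n^{-1}i$ within $O(\delta_n^\alpha)$ of the ray $\tilde u$ at a time near $t_n$, and $t_n<t_{n+1}$.
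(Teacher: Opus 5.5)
\textbf{There is a genuine gap, and it is in Step 1.} Your Step 2, and the orientation part of Step 3, are correct once $0\le a_n<\alpha_{n+1}^-$ is known. But Step 1 is the whole content of the lemma, and the route you sketch cannot establish it.

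\emph{Two of your supporting claims are false.} First, the ``more precise'' target $a_n\in[\alpha_n^-,\alpha_n^+]$ fails. The point $-1/b_0<0<\alpha_0^-$ lies in the arc of $\partial\H\setminus\{\alpha_0^\pm\}$ containing $\infty$, and $\alpha_0^{-1}$ preserves that arc, so $a_0=\alpha_0^{-1}(-1/b_0)\notin[\alpha_0^-,\alpha_0^+]$; in fact $a_n<\alpha_n^-$ for all $n$. Second, the fixed points of $\alpha_1,\dots,\alpha_n$ need not lie in $(0,b_n)$. The proof of Lemma \ref{lemma_vectors} gives $b_n<b_0=\alpha_0(\infty)$, and since $\gamma_k^-\to+\infty$ nothing prevents $\alpha_1^->\alpha_0(\infty)$.

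\emph{More importantly, order data cannot decide the sign of $a_n$.} The cyclic order of endpoints, even with the nesting $\alpha_i^+<\alpha_{i+1}^-$, is not enough. Take $\alpha_0^-=1/4$, $\alpha_0^+=1$ with small translation length. The hypercycle of $\alpha_0$ through $i$ is the circle through $1/4$, $1$ and $i$. It meets the imaginary axis again at $i/4$ and bulges into $\{\mathrm{Re}\,z<0\}$ between $i/4$ and $i$. Pushing $i$ along it toward $1/4$ therefore gives $\mathrm{Re}(\alpha_0^{-1}i)<0$, i.e.\ $a_0<0$, although every order relation you propose to track is as in the good case. What fails in this example is exactly good position: $\tilde u(\R)$ touches the hypercycles of $\alpha_0$ at height $1/2$, below $i$. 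So any proof must use the metric hypotheses:
\begin{itemize}
\item good position of $\tilde u$ (that is, $i$ lies below $q_0^\alpha$);
\item the monotonicity of $t_n$;
\item the disjointness of the hypercycles $\mathcal H(\alpha_n,q_n^\alpha)$.
\end{itemize}
Your Step 1 uses none of them.

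\emph{Step 3 is also incomplete.} The positive-time part is left open, and the fallback misreads the Key Proposition \ref{key_prop}. That proposition compares the rays at corresponding times; it does not place the basepoint $\beta_n^{-1}i$ near $\tilde u(t_n)$. Indeed $\mathrm{Im}(\beta_n^{-1}i)=e^{-r_n}$, which under the summability used later stays in $[e^{-\Delta},e^{\Delta}]$, far below $\tilde u(t_n)=ie^{t_n}$.

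\textbf{The missing idea is to track the basepoint $\beta_n^{-1}i$ in a trapping region, rather than tracking endpoints.} This is what the paper does. Let $R_n$ be the closed region bounded by three pieces: the segment of the imaginary axis from $0$ to $q_n^\alpha$, the arc of $\mathcal H(\alpha_n,q_n^\alpha)$ from $q_n^\alpha$ to $\alpha_n^-$, and $[0,\alpha_n^-]$. Then:
\begin{itemize}
\item Good position gives $i\in R_0$.
\item $\alpha_n^{-1}(R_n)\subset R_n$. In the band model with the axis of $\alpha_n$ as $\{y=0\}$, $R_n$ has the form $\{y\ge y_0,\ x\le x_0-g(y)\}$ with $g\ge 0$, and $\alpha_n^{-1}$ is translation by $-\lambda$.
\item $R_n\subset R_{n+1}$, because $\mathcal H(\alpha_{n+1},q_{n+1}^\alpha)$ is disjoint from $\mathcal H(\alpha_n,q_n^\alpha)$ and touches the imaginary axis only at $q_{n+1}^\alpha$, which lies above $q_n^\alpha$.
\end{itemize}
By induction, $\beta_n^{-1}i=\alpha_n^{-1}(\beta_{n-1}^{-1}i)\in R_n\subset R_{n+1}$. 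This gives $0\le a_n<\alpha_{n+1}^-$ at once, so your Step 2 applies. It also shows that $\beta_n^{-1}i$ lies on the part of the line $\{\mathrm{Re}\,z=a_n\}$ below its lower intersection with $\mathcal H(\alpha_{n+1},q_{n+1}^\alpha)$. The point $q'_{n+1}$ lies in the region enclosed by that hypercycle, so it is reached at positive time. That is exactly the part of good position you could not close.
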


\begin{proof}
	For $n\ge 0$, let $R_n$ be the closed region of the first quadrant bounded by the segment of the imaginary axis between $0$ and $q_n^\alpha$, the arc of the hypercycle $\mathcal H(\alpha_n,q_n^\alpha)$ between $q_n^\alpha$ and $\alpha_n^-$, and the segment $[0,\alpha_n^-]$ of $\partial\H$. Since the hypercycles $\mathcal H(\alpha_n,q_n^\alpha)$ are pairwise disjoint and $q_n^\alpha=\tilde u(t_n^\alpha)$ with $(t_n^\alpha)$ increasing, we have $R_n\subset R_{n+1}$ for all $n\ge 0$. Moreover, since $\mathcal H(\alpha_n,q_n^\alpha)$ is invariant under $\alpha_n$ and $\alpha_n^{-1}$ moves the points of $\partial\H$ toward its attracting fixed point $\alpha_n^-$, one checks that $\alpha_n^{-1}(R_n)\subset R_n$.

	We prove by induction that $\beta_n^{-1}i\in R_n$ for all $n\ge 0$. For $n=0$ we have $\alpha_0^{-1}(i)\in\alpha_0^{-1}(R_0)\subset R_0$, because $i\in R_0$. Assume $\beta_n^{-1}i\in R_n$. Then
	\[
	\beta_{n+1}^{-1}i=\alpha_{n+1}^{-1}\beta_n^{-1}i\in\alpha_{n+1}^{-1}(R_n)\subset\alpha_{n+1}^{-1}(R_{n+1})\subset R_{n+1}.
	\]

	Since $\beta_n^{-1}i\in R_n\subset R_{n+1}$, the vertical ray $\beta_n^{-1}\tilde v_n(\reals^+)$ lies on the left of $\alpha_n^-$, hence it stays at positive distance from the axis of $\alpha_{n+1}$, and is tangent to a hypercycle of $\alpha_{n+1}$ enclosed by $\mathcal H(\alpha_{n+1},q_{n+1}^\alpha)$. 
	The orientation of $\beta_n^{-1}\tilde v_n(\R^+)$ agrees with the one induced by $\alpha_{n+1}$, so $\beta_n^{-1}\tilde v_n$ is in good position with respect to $\alpha_{n+1}$. Finally, the point $q_{n+1}'$ realizing the distance to the axis lies on a hypercycle enclosed by $\mathcal H(\alpha_{n+1},q_{n+1}^\alpha)$, hence closer to the axis than $q_{n+1}^\alpha$. As the displacement function decreases with the distance to the axis, we conclude
	\[
	d(q_{n+1}',\alpha_{n+1}q_{n+1}')\le d(q_{n+1}^\alpha,\alpha_{n+1}q_{n+1}^\alpha)=\delta_{n+1}^\alpha. \qedhere
	\]
\end{proof}

By Lemma \ref{good_position_vn}, the winding $\mathrm{Wind}_{\alpha_{n+1}}(\beta_n^{-1}\tilde v_n)$ is defined, and it is the vector based at $\beta_n^{-1}i$ pointing at $\alpha_{n+1}\infty$. On the other hand, $\beta_n^{-1}\tilde v_{n+1}$ is based at $\beta_n^{-1}i$ and points at $\beta_n^{-1}(\beta_{n+1}\infty)=\alpha_{n+1}\infty$. Hence, for all $n\ge 0$,
\[
\beta_n^{-1}\tilde v_{n+1}=\mathrm{Wind}_{\alpha_{n+1}}(\beta_n^{-1}\tilde v_n).
\]
By the definition of the winding time (Definition \ref{def_winding_time}) applied to $\alpha_{n+1}$ and $\beta_n^{-1}\tilde v_n$, together with Proposition \ref{bound_wind} and Lemma \ref{good_position_vn},
\begin{equation}\label{winding_time_seq}
	\bigl|B_\infty(\alpha_{n+1}^{-1}\beta_n^{-1}i,\,\beta_n^{-1}i)\bigr|
	=\bigl|\tau_{\alpha_{n+1},\,\beta_n^{-1}\tilde v_n}\bigr|
	\le d(q_{n+1}',\alpha_{n+1}q_{n+1}')
	\le \delta_{n+1}^\alpha .
\end{equation}

We define
\[
r_n=B_\infty(\beta_n^{-1}i,\,i).
\]
By construction $g_{r_n}\beta_n^{-1}\tilde v_n\in H^{ss}\tilde u$, so its projection satisfies $g_{r_n}v_n\in H^{ss}u$.

\begin{proposition}\label{convergence_winding_time}
	Assume that $\sum_{n\ge 0}\delta_n^\alpha<+\infty$. Then the sequence $(r_n)_{n\ge 0}$ converges to a real number $r_\alpha$, and $|r_n|\le\sum_{i\ge 0}\delta_i^\alpha$ for all $n\ge 0$; hence $|r_\alpha|\le\sum_{i\ge 0}\delta_i^\alpha$.
\end{proposition}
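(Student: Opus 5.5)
The plan is to write $r_n$ as a telescoping sum of winding times and then use \eqref{winding_time_seq}. Since $\beta_n=\beta_{n-1}\alpha_n$, we have $\beta_n^{-1}i=\alpha_n^{-1}\beta_{n-1}^{-1}i$. The Busemann function satisfies the cocycle relation $B_\infty(x,z)=B_\infty(x,y)+B_\infty(y,z)$. Hence, for $n\ge 1$,
\[
r_n-r_{n-1}=B_\infty(\beta_n^{-1}i,\,\beta_{n-1}^{-1}i)=B_\infty(\alpha_n^{-1}\beta_{n-1}^{-1}i,\,\beta_{n-1}^{-1}i).
\]
By \eqref{winding_time_seq}, applied with index $n-1$, this quantity is bounded in absolute value by $\delta_n^\alpha$.

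The base term is $r_0=B_\infty(\alpha_0^{-1}i,i)$. This is exactly the winding time $\tau_{\alpha_0,\tilde u}$ from Definition \ref{def_winding_time}, since $\tilde u(0)=i$ and $\tilde u(+\infty)=\infty$. By Proposition \ref{normalization}, $\tilde u$ is in good position with respect to $\alpha_0$, because $\alpha_0$ is one of the $\gamma_n$. Proposition \ref{bound_wind} therefore gives $|r_0|\le d(q_0^\alpha,\alpha_0q_0^\alpha)=\delta_0^\alpha$.

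Writing $r_n=r_0+\sum_{k=1}^n(r_k-r_{k-1})$, we get $|r_n|\le\sum_{k=0}^n\delta_k^\alpha\le\sum_{i\ge0}\delta_i^\alpha$. Moreover, the increments satisfy $\sum_k|r_k-r_{k-1}|\le\sum_k\delta_k^\alpha<+\infty$. So $(r_n)$ is a Cauchy sequence (the series converges absolutely) and it converges to some $r_\alpha\in\reals$. Passing to the limit in the uniform bound gives $|r_\alpha|\le\sum_{i\ge0}\delta_i^\alpha$.

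The only delicate point is the bookkeeping of the cocycle identity. I need to check that the increment $r_n-r_{n-1}$ is exactly the winding time $\tau_{\alpha_n,\beta_{n-1}^{-1}\tilde v_{n-1}}$ as it appears in \eqref{winding_time_seq}. That equation uses the base point $\beta_{n-1}^{-1}i$ and the endpoint $\beta_{n-1}^{-1}\tilde v_{n-1}(+\infty)=\infty$, so the match holds.
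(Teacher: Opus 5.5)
Your proof is correct and follows the paper's route: the cocycle identity shows that each increment $r_n-r_{n-1}$ is the winding time $\tau_{\alpha_n,\beta_{n-1}^{-1}\tilde v_{n-1}}$, which \eqref{winding_time_seq} bounds by $\delta_n^\alpha$; the base term $r_0=\tau_{\alpha_0,\tilde u}$ is bounded by Proposition \ref{bound_wind}; and summability then gives both the Cauchy property and the uniform bound. Your intermediate estimate $|r_n|\le\sum_{k=0}^{n}\delta_k^\alpha$ is the correct one, since the paper's displayed $\sum_{i=0}^{n-1}$ is an index slip that does not affect the conclusion.
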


\begin{proof}
	By definition of $r_n$ and \eqref{winding_time_seq},
	\[
	|r_{n+1}-r_n|=\bigl|B_\infty(\alpha_{n+1}^{-1}\beta_n^{-1}i,\,\beta_n^{-1}i)\bigr|
	=\bigl|\tau_{\alpha_{n+1},\,\beta_n^{-1}\tilde v_n}\bigr|\le \delta_{n+1}^\alpha .
	\]
	Hence $|r_m-r_n|\le\sum_{i=n+1}^{m}\delta_i^\alpha$, so $(r_n)$ is a Cauchy sequence and converges. Moreover
	\[
	|r_n|\le |r_0|+\sum_{i=0}^{n-1}|r_{i+1}-r_i|\le\sum_{i=0}^{n-1}\delta_i^\alpha\le\sum_{i\ge 0}\delta_i^\alpha,
	\]
	using $|r_0|=|\tau_{\alpha_0,\tilde u}|\le\delta_0^\alpha$ (Proposition \ref{bound_wind}). Letting $n\to+\infty$ gives the last inequality.
\end{proof}

\subsection{Construction of the vector \texorpdfstring{$w_\alpha$}{w-alpha} in \texorpdfstring{$W^{ss} u$}{Wss u}}\label{subsec_Wss}

Throughout this subsection we fix a subsequence $(\alpha_n)_{n\ge 0}$ of $(\gamma_n)_{n\ge 0}$ such that $\delta_n^\alpha\le 1$ for all $n$ and $\sum_{n\ge 0}\delta_n^\alpha<+\infty$, and we write $\Delta=\sum_{n\ge 0}\delta_n^\alpha$.

By Proposition \ref{convergence_winding_time} the winding times $r_n$ converge to $r_\alpha$, and by Lemma \ref{lemma_vectors} the vectors $\tilde v_n$ converge to $\tilde v_\alpha$. We set, for $n\ge 0$,
\[
\tilde w_n=g_{r_n}\tilde v_n\in T^1\H ,\qquad
\tilde w_\alpha=g_{r_\alpha}\tilde v_\alpha=\lim_{n\to+\infty}\tilde w_n\in T^1\H .
\]
We denote by $v_n$, $v_\alpha$, $w_n$, $w_\alpha$ the projections to $T^1S$ of $\tilde v_n$, $\tilde v_\alpha$, $\tilde w_n$, $\tilde w_\alpha$, respectively. We remark that,since $\beta_n^{-1}\tilde w_n=g_{r_n}\beta_n^{-1}\tilde v_n\in H^{ss}\tilde u$, the vectors $ w_n$ lie in $ H^{ss} u$ and $w_n $ converges to $w_\alpha$.

Our goal is to show that in fact $w_\alpha\in W^{ss}u$. For this we show that the distance between $g_t w_n$ and $g_t u$ tends to zero uniformly in $n$ and then we take the limit in $n$.

\begin{proposition}\label{uniform_tube}
	For every $\varepsilon>0$ there exists a time $T>0$ such that
	\[
	d_1\bigl(g_t u,\,g_t w_n\bigr)<\varepsilon\qquad\text{for all }t>T\text{ and all }n\ge 0 .
	\]
\end{proposition}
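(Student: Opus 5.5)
We compare $g_t\tilde w_n$ with the orbit of $\tilde u$ in successive time windows determined by the tangency times $t_k^\alpha$ of $\tilde u$ with the hypercycles $\mathcal H(\alpha_k,q_k^\alpha)$. Together with the convention $t_{-1}^\alpha=0$, these times split $[0,+\infty)$ into windows.

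\textbf{Telescoping.} For $n\ge 0$ and $-1\le k\le n$, consider the vectors
\[
\tilde u_k:=\beta_k^{-1} g_{r_k}\tilde v_k,\qquad \tilde u_{-1}:=\tilde u .
\]
Each $\tilde u_k$ lies in $H^{ss}\tilde u$, so it is the vertical vector based at height $1$ above the point $\beta_k^{-1}\tilde v_k(0)$. Moreover $\beta_{k}^{-1}\tilde v_{k+1}=\mathrm{Wind}_{\alpha_{k+1}}(\beta_k^{-1}\tilde v_k)$. Applying $\beta_k$, the vector $\tilde v_{k+1}$ is obtained from $\tilde v_k$ by one winding, conjugated by $\beta_k$. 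Since $\Gamma$ acts by isometries, the Key Proposition (Proposition \ref{key_prop}), applied to $\beta_k^{-1}\tilde v_k$ and $\alpha_{k+1}$, gives the following. The hypothesis $d\le 1$ is supplied by Lemma \ref{good_position_vn} together with $\delta^\alpha_{k+1}\le1$. For every $t\ge 0$, the vector $g_{t+\tau}\tilde v_{k+1}$ is within $17\delta_{k+1}^\alpha$ of $g_t\tilde v_k$ or of $\beta_k\alpha_{k+1}\beta_k^{-1}g_t\tilde v_k$, where $\tau=r_{k+1}-r_k$. In either case the projections to $T^1S$ satisfy
\[
d_1(g_{t}w_{k+1},g_{t}w_k)\le 17\,\delta_{k+1}^\alpha + O(\text{shift}),
\]
after reparametrizing time by $r_k$, with $|r_k|\le\Delta$. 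The time shift costs at most $2|r_{k+1}-r_k|\le 2\delta^\alpha_{k+1}$. Summing the telescoping chain $w_{-1}:=u,w_0,\dots,w_n$ naively gives only $d_1(g_tu,g_tw_n)\le 19\Delta$, which is bounded but not small.

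\textbf{Making it small for large $t$.} This is the main point. Fix $\varepsilon$, and choose $N$ with $19\sum_{k> N}\delta_k^\alpha<\varepsilon/2$. For $k\le N$ we need the $k$-th step to contribute nothing for large $t$. Case 2 of the proof of the Key Proposition shows that after the tangency time the wound ray follows $\gamma g_t\tilde u$ with decreasing error. Since $\beta_k^{-1}\tilde v_{k+1}$ and $\alpha_{k+1}\beta_k^{-1}\tilde v_k$ share their forward endpoint, after adjusting by the Busemann shift $\tau$ they lie on the same stable horocycle. Hence their distance tends to $0$ as $t\to\infty$ by Lemma \ref{decreasing} and the standard horocycle contraction in $\H$. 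Projecting, $d_1(g_t w_{k+1},g_t w_k)\to 0$ for each fixed $k$ (indeed $w_k\in H^{ss}u$ for all $k$). Thus there is $T$ with $d_1(g_tu,g_tw_N)<\varepsilon/2$ for $t>T$, since $w_N\in H^{ss}u$. For $n>N$, we bound $d_1(g_tw_N,g_tw_n)$ by the tail sum $\sum_{k=N}^{n-1}d_1(g_tw_k,g_tw_{k+1})\le 19\sum_{k>N}\delta_k^\alpha<\varepsilon/2$, uniformly in $t\ge0$. For $n\le N$, finitely many $w_n\in H^{ss}u$ are handled by enlarging $T$.

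\textbf{Main obstacle.} The delicate step is the uniform-in-$t$ bound $d_1(g_tw_k,g_tw_{k+1})\le C\delta^\alpha_{k+1}$ on $T^1S$. The minimum in Proposition \ref{key_prop} alternates between comparing with $g_t\tilde u$ and with $\gamma g_t\tilde u$. Both project to the same vector, so the minimum is harmless in $T^1S$. The difficulty is the bookkeeping of conjugating by $\beta_k$ and of the time offsets $r_k$, with $|r_k|\le\Delta$, which shift the base times. I would handle this by writing $g_t w_k$ as the projection of $g_{t+r_k}\beta_k^{-1}\tilde v_k$ and applying Proposition \ref{key_prop} at time $t+r_k$. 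For $t\ge\Delta$ this time is nonnegative; the finitely many early times are irrelevant for large $T$.
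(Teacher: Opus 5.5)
Your proposal is correct and follows essentially the paper's proof. You telescope from $w_N$ to $w_n$ using the Key Proposition applied to $\beta_k^{-1}\tilde v_k$ and $\alpha_{k+1}$ at time $s=t+r_k$, which is valid for $t\ge\Delta$ since $|r_k|\le\Delta$, and you handle $d_1(g_tu,g_tw_N)$ and the finitely many $n\le N$ via $w_i\in H^{ss}u$. Two small remarks: the Key Proposition already builds in the shift $\tau=r_{k+1}-r_k$, so your extra $2\delta_{k+1}^\alpha$ is unnecessary and the constant $17$ suffices; and the tail bound holds for $t\ge\Delta$, not uniformly in $t\ge 0$, as you yourself note at the end.
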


\begin{proof}
	Fix $\varepsilon>0$. Since $\sum_k\delta_k^\alpha<+\infty$, we may fix $N$ with $17\sum_{k>N}\delta_k^\alpha<\varepsilon/2$. We will use that, if $n>N$, then 
	\begin{equation} \label{estimate}
	d_1(g_t u, g_t w_n) \le d_1(g_t u, g_t w_N) + \sum_{k=N}^{n-1} d_1(g_t w_{k+1}, g_t w_k).
	\end{equation}
	
	Recall that $\beta_k^{-1}\tilde v_{k+1}=\mathrm{Wind}_{\alpha_{k+1}}(\beta_k^{-1}\tilde v_k)$ and $\tau_{\alpha_{k+1},\beta_k^{-1}\tilde v_k}=r_{k+1}-r_k$, and that, by Lemma \ref{good_position_vn}, the point of $\beta_k^{-1}\tilde v_k(\reals)$ closest to the axis of $\alpha_{k+1}$ has displacement at most $\delta_{k+1}^\alpha\le 1$. Applying the Key Proposition \ref{key_prop} to $\beta_k^{-1}\tilde v_k$ and $\alpha_{k+1}$ and then projecting to $T^1S$, we obtain 
	\[
	d_1\bigl(g_{s+r_{k+1}-r_k}v_{k+1},\,g_s v_k\bigr)\le 17\,\delta_{k+1}^\alpha, \qquad \forall s\ge 0.
	\]

	Substituting $s=t+r_k$ gives 
	\[
	d_1(g_{t}w_{k+1}, g_{t}w_k) = d_1(g_{t+r_{k+1}}v_{k+1}, g_{t+r_k}v_k) \le 17 \delta_{k+1}^\alpha
	\] for all $t \ge -r_k$. Since $|r_k| \le \Delta$, this holds for all $t \ge \Delta$ in particular. Therefore the second term in \eqref{estimate} is bounded by $17\sum_{k=N}^{n-1}\delta_{k+1}^\alpha < \varepsilon /2 $ for all $t \ge \Delta$.

	To bound the first term in \eqref{estimate} or to treat the case $n \le N$, we may observe again that $w_i \in H^{ss} u$, so $d_1(g_t w_i, g_t u) \to 0$ as $t \to +\infty$. Hence there exists $T \ge  \Delta$ such that $d_1(g_t w_i, g_t u) < \varepsilon / 2$ for all $t > T$. This concludes the proof.
\end{proof}

Since $w_\alpha=\lim_{n\to+\infty}w_n$, letting $n\to+\infty$ in Proposition \ref{uniform_tube} gives, for every $\varepsilon>0$ and every $t>T$,
\[
d_1(g_t w_\alpha,g_t u)=\lim_{n\to+\infty}d_1(g_t w_n,g_t u)\le\varepsilon .
\]
Hence $\lim_{t\to+\infty}d_1(g_t w_\alpha,g_t u)=0$, that is, $w_\alpha\in W^{ss}u$.

\subsection{Construction of the vector \texorpdfstring{$w_\alpha$}{w-alpha} in \texorpdfstring{$W^{ss} u\setminus H^{ss}u$}{Wss u minus Hss u}}\label{subsec_uncountable}

We denote by $\Sigma$ the set of subsequences $\alpha=(\alpha_n)_{n\ge 0}$ of $(\gamma_n)_{n\ge 0}$ such that
\[
\delta_n^\alpha\le 1\ \ \text{for all }n\ge 0\qquad\text{and}\qquad\sum_{n\ge 0}\delta_n^\alpha<+\infty .
\]
By Proposition \ref{uniform_tube}, every $\alpha\in\Sigma$ satisfies $w_\alpha\in W^{ss}u$.
Since $\delta_n=d(q_n,\gamma_n q_n)\to 0$, the set $\Sigma$ is nonempty. For $\alpha\in\Sigma$ we set
\[
\xi(\alpha)=\tilde w_\alpha(+\infty)=\lim_{n\to+\infty}\beta_n(\infty)\in\reals,
\]
which is a positive number by Lemma \ref{lemma_vectors}.

\begin{proposition}\label{uncountable}
	The image $\xi(\Sigma)$ is uncountable.
\end{proposition}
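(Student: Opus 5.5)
The plan is to show that $\xi$ is injective on a suitable uncountable subfamily of $\Sigma$, using the nested-interval structure from Lemma \ref{lemma_vectors}. First fix a subsequence $(\gamma_{m_k})_{k\ge 0}$ of $(\gamma_n)$ with $\delta_{m_k}\le 2^{-k}$ (and $\le 1$). For every infinite subset $A\subset\mathbb N$, listed increasingly as $a_0<a_1<\cdots$, the subsequence $\alpha^A=(\gamma_{m_{a_j}})_j$ belongs to $\Sigma$, since $\sum_j\delta^{\alpha^A}_j\le\sum_k 2^{-k}<\infty$. There are uncountably many such $A$, so it suffices to show that $A\mapsto\xi(\alpha^A)$ is injective, or at least finite-to-one.

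The key geometric input is a localization of $\xi(\alpha)$. From the proof of Lemma \ref{lemma_vectors}, for any $\alpha\in\Sigma$ we have $\alpha_0^+<\alpha_0\alpha_1\cdots\alpha_n\infty<\alpha_0\infty$. Since the sequence is decreasing, the limit satisfies $\xi(\alpha)\in[\alpha_0^+,\alpha_0\infty)$, and in fact $\xi(\alpha)=\alpha_0(\xi(\sigma\alpha))$, where $\sigma$ is the shift and $\xi(\sigma\alpha)\in[\alpha_1^+,\alpha_1\infty)$. Now take $A\ne B$, and let $k$ be the first index at which they differ, say $a_k<b_k$. Both share the prefix $\beta=\alpha_0\cdots\alpha_{k-1}$, which is a homeomorphism of $\partial\H$. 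So $\xi(\alpha^A)=\beta(x)$ and $\xi(\alpha^B)=\beta(y)$ with
\[
x\in[\gamma_{m_{a_k}}^+,\gamma_{m_{a_k}}\infty),\qquad y\in[\gamma_{m_{b_k}}^+,\gamma_{m_{b_k}}\infty).
\]
It remains to show that these two intervals are disjoint, which gives $x\ne y$ and hence $\xi(\alpha^A)\ne\xi(\alpha^B)$.

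The main obstacle is this disjointness. We know $\gamma_n^+<\gamma_{n'}^-$ for $n<n'$, but we also need $\gamma_n\infty\le\gamma_{n'}^+$, which is not automatic: $\gamma_n\infty$ could be very large. There are two ways to handle it. The first is to pass to a further subsequence of $(\gamma_n)$ at the start so that $\gamma_n\infty<\gamma_{n+1}^-$ for all $n$. This should be possible because $\gamma_n$ preserves $\mathcal H(\gamma_n,q_n)$ and the region outside it, and maps $\infty$ into the boundary arc cut by the image of the imaginary axis. That image is a geodesic tangent to $\mathcal H(\gamma_n,q_n)$ at $\gamma_n q_n$, and its endpoints stay in a bounded range, roughly $\gamma_n^+$ plus a multiple of $x_n=e^{t_n}$, while $\gamma_{n'}^-\to\infty$. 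So for each $n$ only finitely many later indices can fail, and a diagonal extraction works. The extraction must be done before choosing the $m_k$; it preserves Proposition \ref{normalization}.

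If this bound proves delicate, the fallback avoids it entirely. Since $\xi(\alpha)\ge\alpha_0^+$ and $\gamma_{n}^+\to\infty$, each value of $\xi$ can have only countably many preimages under an $A\mapsto\xi$ map, by a similar first-difference analysis combined with the injectivity of the prefix $\beta$. An uncountable set modulo countable fibers is still uncountable, which also concludes the proof.
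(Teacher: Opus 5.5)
Your main argument is correct, but it takes a different route from the paper's. The paper uses a Cantor diagonal argument. Assuming $\xi(\Sigma)=\{x_0,x_1,\dots\}$, it builds a single $\alpha'\in\Sigma$ inductively, choosing $\alpha'_{n+1}$ with $(\alpha'_{n+1})^+>{\alpha'_n}^{-1}\cdots{\alpha'_0}^{-1}(x_{n+1})$. It only needs the one-sided localization $\alpha_{n-1}^{-1}\cdots\alpha_0^{-1}(\xi(\alpha))\ge\alpha_n^+$ to conclude $\xi(\alpha')\neq x_n$.

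You instead build an explicit injection $A\mapsto\xi(\alpha^A)$ from the infinite subsets of $\mathbb N$. This requires two things the paper avoids: the two-sided localization $\alpha_{k-1}^{-1}\cdots\alpha_0^{-1}(\xi(\alpha))\in[\alpha_k^+,\alpha_k\infty)$, and a preliminary extraction making the intervals $[\gamma_n^+,\gamma_n\infty)$ pairwise disjoint. The paper's route is shorter and needs neither the upper endpoint nor the extraction. Yours proves more: $\xi$ is injective on a Cantor-type family, so $\xi(\Sigma)$ has the cardinality of the continuum.

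Two remarks on your write-up.

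First, the extraction is easier than your sketch suggests. For fixed $n$, $\gamma_n\infty$ is simply a real number; it lies in $(\gamma_n^+,+\infty)$, as in the proof of Lemma \ref{lemma_vectors}. The proof of Proposition \ref{normalization} already shows $\gamma_{n'}^-\to+\infty$. So you can pick $n_0<n_1<\cdots$ inductively with $\gamma_{n_j}\infty<\gamma_{n_{j+1}}^-$. Since $(\gamma_n^-)$ is increasing, this gives $\gamma_{n_j}\infty<\gamma_{n_{j'}}^-<\gamma_{n_{j'}}^+$ for all $j<j'$, which is exactly the disjointness you need. No estimate on the image of the imaginary axis is required.

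Second, drop the fallback paragraph; it is not a valid argument. The first-difference analysis only shows that, for a given value $x$, each $a_k$ has finitely many admissible choices once $a_0,\dots,a_{k-1}$ are fixed. A finitely branching tree can have uncountably many infinite branches, so countability of the fibres does not follow.
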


\begin{proof}
	We use the following property, which follows from the dynamics of the $\alpha_n$ as in the proof of Lemma \ref{lemma_vectors}: for every $\alpha\in\Sigma$ and every $n\ge 0$,
	\begin{equation}\label{above_fixed_point}
		\alpha_n\alpha_{n+1}\cdots\alpha_m(\infty)>\alpha_n^+\quad\text{for all }m\ge n,
		\qquad\text{hence}\qquad
		\alpha_{n-1}^{-1}\cdots\alpha_0^{-1}\bigl(\xi(\alpha)\bigr)\ge\alpha_n^+ .
	\end{equation}

	Assume, for contradiction, that $\xi(\Sigma)$ is countable and enumerate it as $\{x_0,x_1,\dots\}$. We construct $\alpha'=(\alpha_n')\in\Sigma$ with $\xi(\alpha')\neq x_i$ for all $i$.

	Choose $\alpha_0'=\gamma_{k_0}$ with $\delta_{0}^{\alpha'}<1$ and $\alpha_0'^+>x_0$. Inductively, having chosen $\alpha_0',\dots,\alpha_n'$, pick $\alpha_{n+1}'=\gamma_{k_{n+1}}$ with $k_{n+1}>k_n$ such that
	\[
	\delta_{n+1}^{\alpha'}<\frac{1}{2^{n+1}}
	\qquad\text{and}\qquad
	\alpha_{n+1}'^+>\alpha_n'^{-1}\cdots\alpha_0'^{-1}(x_{n+1}).
	\]
	This is possible since $\gamma_k^+\to+\infty$ and $\delta_k\to 0$. Since $\sum_{n\ge 0}\delta_n^{\alpha'}<\sum_{n\ge 0}2^{-n}<+\infty$ and $\delta_n^{\alpha'}\le 1$, the resulting sequence $\alpha'$ belongs to $\Sigma$.

	By \eqref{above_fixed_point} and the choice of $\alpha_n'^+$, we have for $n\ge 1$
	\[
	\alpha_{n-1}'^{-1}\cdots\alpha_0'^{-1}\bigl(\xi(\alpha')\bigr)\ge\alpha_n'^+>\alpha_{n-1}'^{-1}\cdots\alpha_0'^{-1}(x_n),
	\]
	whence $\xi(\alpha')\neq x_n$. Finally $\xi(\alpha')>\alpha_0'^+>x_0$, so $\xi(\alpha')\neq x_0$. This contradicts $\xi(\alpha')\in\xi(\Sigma)=\{x_i\}$.
\end{proof}

\begin{corollary}\label{cor2}
	There exists a subsequence $\alpha\in\Sigma$ such that $w_\alpha\notin H^{ws}u$. Moreover, $W^{ss}u$ contains an uncountable union of pairwise distinct stable horocycles.
\end{corollary}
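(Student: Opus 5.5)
The plan is to combine Proposition \ref{uncountable} with the fact that, for every $\alpha\in\Sigma$, the vector $w_\alpha$ lies in $W^{ss}u$. Then we simply count weak stable leaves.

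First we identify the leaves $H^{ws}$. In $T^1\H$, the vectors of $H^{ws}\tilde u$ are those pointing at $\infty=\tilde u(+\infty)$. Hence the preimage of $H^{ws}u$ in $T^1\H$ is the union of $H^{ws}(\gamma\tilde u)$ over $\gamma\in\Gamma$, that is, the set of vectors whose forward endpoint lies in the orbit $\Gamma\infty$. This orbit is countable since $\Gamma$ is countable. The lift $\tilde w_\alpha$ has forward endpoint $\xi(\alpha)$, so $w_\alpha\in H^{ws}u$ if and only if $\xi(\alpha)\in\Gamma\infty$. More precisely, any lift of $w_\alpha$ has the form $\gamma\tilde w_\alpha$, and it points at $\infty$ only if $\xi(\alpha)=\gamma^{-1}\infty$. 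By Proposition \ref{uncountable}, $\xi(\Sigma)$ is uncountable, while $\Gamma\infty$ is countable. Therefore some $\alpha\in\Sigma$ has $\xi(\alpha)\notin\Gamma\infty$, and for this $\alpha$ we get $w_\alpha\notin H^{ws}u$. This proves the first claim. Since $H^{ss}u\subset H^{ws}u$, it also shows $w_\alpha\in W^{ss}u\setminus H^{ss}u$.

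For the second claim we use the same countability argument on the equivalence relation ``$\xi(\alpha)$ and $\xi(\alpha')$ lie in the same $\Gamma$-orbit''. Each $\Gamma$-orbit in $\partial\H$ is countable and $\xi(\Sigma)$ is uncountable, so there are uncountably many $\alpha_i\in\Sigma$ whose values $\xi(\alpha_i)$ lie in pairwise distinct $\Gamma$-orbits. The corresponding vectors $w_{\alpha_i}$ then lie in pairwise distinct weak stable leaves $H^{ws}w_{\alpha_i}$ of $T^1S$: two vectors of $T^1S$ lie in the same weak stable leaf exactly when lifts of them share a forward endpoint, i.e. when their endpoints are $\Gamma$-equivalent. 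A fortiori the stable horocycles $H^{ss}w_{\alpha_i}$ are pairwise distinct.

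It remains to check that $H^{ss}w_{\alpha_i}\subset W^{ss}u$. This holds because $W^{ss}u$ is saturated by stable horocycles. If $v\in H^{ss}w$, lift to $\tilde v\in H^{ss}\tilde w$. Then $d_1(g_t\tilde v,g_t\tilde w)\to0$ by the standard contraction along horocycles in $\H$, and this bound projects to $T^1S$. The triangle inequality with $d_1(g_tw,g_tu)\to0$ gives $v\in W^{ss}u$.

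No serious obstacle is expected. The only point needing care is the identification of $H^{ws}$-leaves in $T^1S$ with $\Gamma$-orbits of endpoints, which follows directly from $\Gamma$-equivariance.
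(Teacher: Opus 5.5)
Your proposal is correct and follows the paper's argument. The countability of $\Gamma\infty$ against the uncountability of $\xi(\Sigma)$ gives some $w_\alpha\notin H^{ws}u$, and choosing $\alpha$'s whose endpoints lie in pairwise distinct $\Gamma$-orbits gives uncountably many distinct stable horocycles, with the only addition being that you spell out why $H^{ss}w_\alpha\subset W^{ss}u$ (saturation of $W^{ss}u$ by stable horocycles), which the paper uses implicitly.
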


\begin{proof}
	If $w\in T^1S$ belongs to $H^{ws}u$, then $\tilde w(+\infty)\in\Gamma\tilde u(+\infty)=\Gamma\infty$ for any lifts. Since $\Gamma$ is countable and $\xi(\Sigma)$ is uncountable by Proposition \ref{uncountable}, the set $\xi(\Sigma)$ meets uncountably many $\Gamma$-orbits in $\partial\H$. Hence there exists $\alpha\in\Sigma$ with $\xi(\alpha)=\tilde w_\alpha(+\infty)\notin\Gamma\infty$, that is, $w_\alpha\notin H^{ws}u$. By Proposition \ref{uniform_tube}, all these $w_\alpha$ lie in $W^{ss}u$, and their forward endpoints lie in distinct $\Gamma$-orbits, so the stable horocycles $H^{ss}w_\alpha\subset W^{ss}u$ are pairwise distinct and uncountable in number.
\end{proof}

\section*{Declaration on the use of AI}
During the preparation of this work the authors used Claude (Anthropic) to assist with the writing and the proofreading of the manuscript. The mathematical content and the arguments are entirely the authors' own. The authors have reviewed all the text and take full responsibility for the content of the article.

\bibliographystyle{alpha}
\bibliography{general_library}

\end{document}